\documentclass[10pt, english]{article}


\usepackage[margin= 1.5 cm]{geometry}

\usepackage{amsthm}
\usepackage{amsmath}
\usepackage{amssymb}
\usepackage{setspace}
\usepackage{mathtools}
\usepackage{verbatim}
\usepackage{csquotes}
\usepackage{graphicx}
\usepackage[hidelinks]{hyperref}
\usepackage{thm-restate}
\usepackage{cleveref}
\usepackage{graphicx}
\usepackage{appendix}
\usepackage[inline]{enumitem}
\usepackage{framed}
\usepackage{subcaption}

\usepackage{bbm}
\usepackage{mathrsfs}
\usepackage[makeroom]{cancel}
\usepackage{caption}

\usepackage{floatrow}
\usepackage[T1]{fontenc}

\usepackage{tikz}
\usepackage{mathdots}
\usepackage{xcolor}
\usepackage{diagbox}
\usepackage{colortbl}
\usepackage[absolute,overlay]{textpos}

\graphicspath{ {./images/} }

\usetikzlibrary{calc}
\usetikzlibrary{decorations.pathreplacing}
\usetikzlibrary{positioning,patterns}
\usetikzlibrary{arrows,shapes,positioning}
\usetikzlibrary{decorations.markings}

\tikzstyle{edge}=[very thick]
\definecolor{bostonuniversityred}{rgb}{0.8, 0.0, 0.0}
\definecolor{arsenic}{rgb}{0.23, 0.27, 0.29}
\tikzstyle{diredge}=[postaction={decorate,decoration={markings,
		mark=at position .95 with {\arrow[scale = 1]{stealth};}}}]

\newcommand{\defPt}[3]{
	\def \pt {(#1, #2)}
	\coordinate [at = \pt, name = #3];
}

\tikzset{
   conn/.pic={
     \defPt{0.2}{-0.5}{q0}
     \defPt{-1}{-1.5}{q5}
    \defPt{1}{1.2}{q1}
    \defPt{1}{2.7}{q6}
    \defPt{1.25}{-1.2}{q2}
    \defPt{2.5}{0.6}{q3}
    \defPt{2.5}{-0.6}{q4}
  
        \draw[line width=1 pt] (q0) -- (q1) -- (q3) -- (q4);
        \draw[line width=1 pt] (q2) -- (q3);
        \draw[line width=1 pt] (q0) -- (q5);
        \draw[line width=1 pt] (q1) -- (q6);
  }
}

\newcommand{\fitellipsis}[3] 
{\draw []let \p1=(#1), \p2=(#2), \n1={atan2(\y2-\y1,\x2-\x1)}, \n2={veclen(\y2-\y1,\x2-\x1)}
    in ($ (\p1)!0.5!(\p2) $) ellipse [ x radius=\n2/2+0.3cm+#3cm, y radius=#3cm, rotate=\n1];
}

\newcommand{\fitellipsiss}[3] 
{\draw [fill=white]let \p1=(#1), \p2=(#2), \n1={atan2(\y2-\y1,\x2-\x1)}, \n2={veclen(\y2-\y1,\x2-\x1)}
    in ($ (\p1)!0.5!(\p2) $) ellipse [ x radius=\n2/2+#3cm, y radius=#3cm, rotate=\n1];
}

\newcommand{\fitellipsisss}[3] 
{\draw []let \p1=(#1), \p2=(#2), \n1={atan2(\y2-\y1,\x2-\x1)}, \n2={veclen(\y2-\y1,\x2-\x1)}
    in ($ (\p1)!0.5!(\p2) $) ellipse [ x radius=\n2/2+#3cm, y radius=#3cm, rotate=\n1];
}

\setlength{\parskip}{\medskipamount}
\setlength{\parindent}{0pt}

\addtolength{\intextsep}{0pt} 
\addtolength{\abovecaptionskip}{2pt}
\addtolength{\belowcaptionskip}{-2pt}
\captionsetup{width=0.8\textwidth, labelfont=bf, parskip=5pt}

\setstretch{1.1}

\theoremstyle{plain}

\newtheorem*{thm*}{Theorem}
\newtheorem{thm}{Theorem}
\Crefname{thm}{Theorem}{Theorems}
\numberwithin{thm}{section}

\newtheorem*{lem*}{Lemma}
\newtheorem{lem}[thm]{Lemma}
\Crefname{lem}{Lemma}{Lemmas}

\newtheorem*{claim*}{Claim}

\Crefname{claim}{Claim}{Claims}
\Crefname{claim}{Claim}{Claims}

\newtheorem{prop}[thm]{Proposition}
\Crefname{prop}{Proposition}{Propositions}

\newtheorem{cor}[thm]{Corollary}
\Crefname{cor}{Corollary}{Corollaries}

\Crefname{conj}{Conjecture}{Conjectures}

\Crefname{qn}{Question}{Questions}

\Crefname{obs}{Observation}{Observations}

\Crefname{ex}{Example}{Examples}

\theoremstyle{definition}

\Crefname{prob}{Problem}{Problems}

\Crefname{defn}{Definition}{Definitions}

\newtheorem*{defn*}{Definition}

\theoremstyle{remark}

\renewenvironment{proof}[1][]{\begin{trivlist}
\item[\hspace{\labelsep}{\bf\noindent Proof#1.\/}] }{\qed\end{trivlist}}

\newcommand{\floor}[1]{
    \left\lfloor #1 \right\rfloor
}

\expandafter\def\expandafter\normalsize\expandafter{%
    \normalsize
    \setlength\abovedisplayskip{8pt}
    \setlength\belowdisplayskip{8pt}
    \setlength\abovedisplayshortskip{4pt}
    \setlength\belowdisplayshortskip{4pt}
}

\usepackage[square,sort,comma,numbers]{natbib}
\setlength{\bibsep}{0.5 pt}

 \setlist[itemize]{leftmargin=*}

\newcommand{\G}{\mathcal{G}}

\DeclareFontFamily{OT1}{pzc}{}
\DeclareFontShape{OT1}{pzc}{m}{it}{<-> s * [1.10] pzcmi7t}{}
\DeclareMathAlphabet{\mathpzc}{OT1}{pzc}{m}{it}

\DeclareMathOperator{\SL}{SL}
\DeclareMathOperator{\Cay}{Cay}
\newcommand{\gall}[1]{\G^{\text{all}}_{#1}}
\newcommand{\gpla}[1]{\G^{\text{planar}}_{#1}}
\newcommand{\greg}[1]{\G^{k\text{-regular}}_{#1}}

\newcount\colveccount
\newcommand*\colvec[1]{
        \global\colveccount#1
        \begin{bmatrix}
        \colvecnext
}
\def\colvecnext#1{
        #1
        \global\advance\colveccount-1
        \ifnum\colveccount>0
                \\
                \expandafter\colvecnext
        \else
                \end{bmatrix}
        \fi
}

\title{\vspace{-0.8cm} 
The spanning tree spectrum: improved bounds and simple proofs}

\author{Noga Alon\thanks{Department of Mathematics, Princeton
University, Princeton, USA. Email: \href{mailto:nalon@math.princeton.edu} {\nolinkurl{nalon@math.princeton.edu}}. Research supported in part by NSF grant
DMS-2154082.}
\and
Matija Buci\'c\thanks{Department of Mathematics, Princeton University, Princeton, USA. Email: \href{mailto:mb5225@princeton.edu} {\nolinkurl{mb5225@princeton.edu}}. Research supported in part by an NSF Grant DMS--2349013.}
 \and Lior Gishboliner\thanks{Department of Mathematics, University of Toronto, Canada. Email: \href{mailto:lior.gishboliner@utoronto.ca}{\tt lior.gishboliner@utoronto.ca}. Research supported by the NSERC Discovery Grant ``Problems in Extremal and Probabilistic Combinatorics".}
}

 \date{}

\begin{document}

\maketitle

\vspace{-0.5cm}
\begin{abstract}
The number of spanning trees of a graph $G$, denoted $\tau(G)$, is 
a well studied graph parameter with numerous connections to other areas of mathematics. In a recent remarkable paper, answering a question of Sedl\'a\v{c}ek from 1969, Chan, Kontorovich and Pak showed that $\tau(G)$ takes at least $1.1103^n$ different values across simple (and planar) $n$-vertex graphs $G$, for large enough $n$. We give a very short, purely combinatorial proof that at least $1.55^n$ values are attained. 
We also prove that exponential growth can be achieved with regular graphs, determining the growth rate in another problem first raised by Sedl\'a\v{c}ek in the late 1960's. We further show that the following modular dual version of the result holds.
For any integer $N$ and any $u < N$ there exists a planar graph 
on $O(\log N)$ vertices whose number of spanning trees is $u$
modulo $N$.

\end{abstract} 

\vspace{-0.5
cm}

\section{Introduction} 
\vspace{-0.3
cm}
Given a graph $G$, we denote by $\tau(G)$ the number of spanning trees of $G$. This simple quantity has numerous interpretations, with perhaps 
the simplest and most classical being its expression 
as the determinant of a certain type of symmetric matrices via Kirchhoff's matrix tree theorem \cite{kirchhoff} from 1847. The notion has a long history dating back to 1860 and the first proof of Cayley's formula \cite{cayley}. It can also be viewed as capturing a certain notion of ``complexity'' of $G$. Besides being extensively studied in combinatorics, it has many, often surprising, connections to other areas of mathematics and beyond, including Commutative algebra, Probability theory, Theory of Lie groups, Combinatorial 
Optimization and Electrical Networks.

In the present short paper we are interested in one of 
the most basic questions that can be asked about the function 
$\tau$, namely how large is its range, when evaluated on, say, 
all $n$ vertex graphs? This question was first raised by Sedl\'a\v{c}ek in the 1960's \cite{sed66,sed69,sed70} and has since been reiterated many times, the earliest of which being the book \cite{moon} of Moon from 1970. In fact, he raised this question for three natural families of graphs: $\gall{n},\: \gpla{n},\: \greg{n}$ -- the families of all, planar, and $k$-regular graphs on $n$ vertices, respectively. With this in mind, given a family of simple\footnote{In this paper, all graphs we consider are assumed to be simple, unless otherwise specified.} graphs $\mathcal{G},$ let us define its \emph{tree spectrum} $T(\mathcal{G}):=\{\tau(G) \mid G \in \mathcal{G}\}.$ It is immediate that $|T(\gall{n})|\ge |T(\gpla{n})|,$ and somewhat remarkably, historically, the essentially best known lower bounds on $|T(\gall{n})|$ 
came from lower bounds on $|T(\gpla{n})|,$ despite a seemingly vast amount of additional freedom afforded to graphs in $\gall{n}$ compared to $\gpla{n}$. In particular, for any $G\in \gpla{n},$ by exploiting degeneracy, it is easy to see that $\tau(G) \le 6^n$, and the current best upper bound is $\tau(G)\le 5.2852^n$ \cite{BS10} (compared to $n^{n-2}$ for $G \in \gall{n}$). This also gives the essentially best known upper bound on $|T(\gpla{n})|$.

The first lower bound on $|T(\gpla{n})|$ was given already in one of the original papers by Sedl\'a\v{c}ek in 1969 \cite{sed69}, who proved that $|T(\gpla{n})| = \Omega(n^2)$. In \cite{aza14}, Azarija obtained a major improvement, by showing that $|T(\gpla{n})| \ge 
2^{\Omega(\sqrt{n/\log n})}$. Subsequently, a result of Stong \cite{stong}, who was interested in a certain dual question which we will discuss later, implies that $|T(\gpla{n})| \ge  2^{\Omega(n^{2/3})}$. 
In a remarkable recent paper, Chan, Kontorovich and Pak \cite{CKP} showed that $|T(\gpla{n})| \ge 1.1103^n$, for large $n$. Their argument uses combinatorial ideas combined with ideas 
concerning the theory of continued fractions. 

We give a purely combinatorial, short and simple proof with a significantly stronger exponential bound. 

\begin{thm}\label{thm:planar}
    $|T(\gall{n})|\ge |T(\gpla{n})| \ge 0.9 c^n$ for large enough $n$, where $c = \sqrt{\sqrt{2}+1} \geq 1.55$.
\end{thm}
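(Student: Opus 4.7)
The plan is to exhibit, for every sufficiently large $n$, at least $1.49^n$ planar graphs on $n$ vertices with pairwise distinct spanning-tree counts. The key identity is the multiplicativity of $\tau$ under gluing at a cut vertex: if $G$ is obtained by identifying one vertex of a planar graph $G_1$ with one vertex of a planar graph $G_2$, then $G$ is planar and $\tau(G)=\tau(G_1)\tau(G_2)$. Iterating this, a "cactus" of blocks glued along a planar backbone has $\tau$ equal to the product of the block values. All the combinatorial freedom will be packaged into choosing which blocks to glue.

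Concretely, I would fix a block size $s$ of order $\log n$ and let $m=\lfloor n/s\rfloor$. For each of the $m$ positions along a path-shaped backbone, I would prepare a menu of $q:=\lceil 1.49^s\rceil$ planar blocks of size $s$, each with a distinct prime spanning-tree count, arranged so that the $mq$ primes used across all menus are pairwise distinct. The final graph is built by picking one block per position and gluing it to the backbone at a cut vertex; by multiplicativity of $\tau$ and the distinctness of the chosen primes, different choice sequences must yield different products $\tau(G)=\prod_{i=1}^m \tau(B_i)$, and there are exactly $q^m=1.49^{sm}\ge 1.49^n$ such sequences. Natural candidates for menu blocks include theta graphs $\theta(1,2,k)$ (on $k+2$ vertices with $\tau=3k+2$, which by Dirichlet's theorem hits infinitely many primes), short prime cycles $C_p$, and small planar $2$-connected graphs with a chord pattern chosen to land $\tau$ on a specified prime.

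The main obstacle is producing enough prime $\tau$-values at size $s=\Theta(\log n)$ to fill all $m$ menus with $q$ distinct primes each (a total of $mq$ pairwise distinct primes). A naive count is encouraging: the number of planar $2$-connected simple graphs on $s$ vertices grows like $c^s$ for a constant $c$ well above $1.49$, their $\tau$-values live in $[1,\tilde c^s]$ with $\tilde c\ge 6/(1+o(1))$, and a PNT-type density heuristic makes the requisite $\sim n^{1+O(1/\log n)}$ primes plausible. Turning this heuristic into a theorem is the technical heart of the argument: one wants either a clean explicit family of planar blocks whose $\tau$-values provably cover the required primes (e.g., by engineering theta-graph variants and verifying primality from the resulting arithmetic formula $ab+bc+ca$), or a density argument over a carefully chosen family of perturbations of a base graph showing that many prime $\tau$-values are realized per size. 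Once the menus are in hand the rest of the proof is essentially bookkeeping in the multiplicative structure of the cactus construction.
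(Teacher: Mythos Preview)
Your plan has a genuine gap at exactly the point you flag as ``the technical heart of the argument.'' To fill your menus you need, for $s=\Theta(\log n)$, at least $q=\lceil 1.49^s\rceil$ planar blocks on $s$ vertices with pairwise distinct prime $\tau$-values (in fact $mq$ such primes across positions). But merely showing that planar graphs on $s$ vertices realise $1.49^s$ \emph{distinct} values of $\tau$ is precisely the theorem you are trying to prove, applied at scale $s$; asking in addition that these values be prime is strictly harder. None of the explicit families you mention comes close: theta graphs $\theta(a,b,c)$ on $s$ vertices give only $O(s^2)$ candidate values $ab+bc+ca$, and cycles $C_p$ force block size $p$ rather than $\log p$. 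The ``PNT-type density heuristic'' does not help either, because it would need to be applied to the \emph{image} of $\tau$ on $s$-vertex planar graphs, and controlling that image is the whole problem. So the cactus/prime scheme is not a reduction to something easier---it bootstraps from a statement at least as hard as the target.

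By contrast, the paper avoids any arithmetic about which integers are hit by $\tau$. It tracks the pair $\big(\tau(G\setminus e),\,\tau(G-e)\big)$ for a marked edge $e$, observes that subdividing $e$ or placing a triangle on $e$ act on this pair by fixed $2\times 2$ integer matrices, and shows that suitable products of these matrices applied to $\binom{1}{0}$ yield at least $2.23^n$ distinct $(n+2)$-planar-feasible vectors. A one-line pigeonhole (if many vectors $\binom{x_i}{y_i}$ are distinct then either many $y_i$ or many $x_i+y_i$ are distinct, and both are spanning-tree counts of planar graphs) then converts $2.23^n$ vectors into $\sqrt{2.23^n}\ge 1.49^n$ distinct values of $\tau$. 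The point is that generating many distinct \emph{pairs} is easy via the free-semigroup structure of the matrices, and the loss in passing from pairs to single values is only a square root.
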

In fact, we give an even simpler, one page proof of a slightly weaker bound $|T(\gpla{n})| \ge 2^{n/2-1},$ which holds for all $n$. 

Turning to regular graphs, we note that the natural question of determining $|\greg{n}|$ has also been originally raised by Sedl\'a\v{c}ek \cite{sed69} in 1969 and has since been reiterated several times over the years by various authors \cite{moon,sed70b,sed77,aza14,CKP}. In the original paper, Sedl\'a\v{c}ek proved for $k=3$ that $|\greg{n}|$ grows at least linearly in $n$ (provided $n$ is even\footnote{Note that if $k$ is odd there are no $k$-regular graphs on $n$ vertices unless $n$ is even.}). In a subsequent paper \cite{sed70b}, he extended this result to any fixed $k$. We improve these linear bounds to exponential ones.


\begin{thm}\label{thm:regular}
    For any fixed integer $k\ge 3$ there are at least $2^{\Omega(n)}$ different values of $\tau(G)$ among $k$-regular connected graphs $G$ on $n$ vertices, provided that $kn$ is even.
\end{thm}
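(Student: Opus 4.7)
The plan is to construct, for every sufficiently large $N$, a family $\{G_x : x \in \{0,1\}^N\}$ of connected $k$-regular graphs on $n = \Theta_k(N)$ vertices with at least $2^{\Omega(N)}$ distinct spanning-tree counts, by chaining small $k$-regular ``gadgets.'' Concretely, I would first exhibit two connected $2$-terminal gadgets $H_0, H_1$: graphs on the same constant number $m = m(k)$ of vertices, each with two distinguished vertices $s, t$ of degree $k - 1$ and all other vertices of degree $k$. For $k = 3$ such gadgets are easy to produce (small cubic graphs with two chosen degree-$2$ vertices), and the construction extends to all $k \ge 3$ by direct construction or by a clique-blow-up. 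Given $x \in \{0,1\}^N$, form $G_x$ by placing copies of $H_{x_1}, \ldots, H_{x_N}$ in a cycle and adding, for each $i$ modulo $N$, the edge $t_{x_i} s_{x_{i+1}}$; each such edge lifts the degrees of its endpoints from $k - 1$ to $k$, so $G_x$ is $k$-regular on $mN$ vertices and connected whenever each $H_b$ is. A single adjustable padding gadget inserted into the chain handles every admissible $n$ with $kn$ even.

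I would evaluate $\tau(G_x)$ via a transfer-matrix expansion. To each $2$-terminal gadget $H$, associate a $2 \times 2$ non-negative integer matrix $T_H$ whose entries are the natural spanning-forest counts of $H$ relative to $\{s, t\}$: the number $\tau(H)$ of spanning trees, the number of spanning $2$-forests separating $s$ from $t$, and their companion quantities describing how a spanning forest can cross the gadget. A standard block-Laplacian / Kirchhoff expansion then expresses $\tau(G_x)$ as a fixed linear functional -- essentially a trace -- of the ordered product $T_{H_{x_1}} T_{H_{x_2}} \cdots T_{H_{x_N}}$.

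The core of the argument is to show that this trace-like quantity attains $2^{\Omega(N)}$ distinct values as $x$ ranges over $\{0,1\}^N$. I would choose $H_0, H_1$ so that the matrices $T_0, T_1$ satisfy a Schottky-type ping-pong condition; then $T_0, T_1$ generate a free multiplicative semigroup, and distinct $\{0,1\}$-words yield distinct matrix products. An elementary upgrade -- for instance, an explicit check modulo one well-chosen small prime $p$ -- promotes this to $2^{\Omega(N)}$ distinct traces. The main obstacle is precisely this last step, since cyclic conjugates share a trace and equal traces do not imply equal matrices; the cleanest resolution is the mod-$p$ verification, which reduces to a finite calculation once the gadgets $H_0, H_1$ are fixed.
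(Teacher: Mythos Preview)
Your transfer–matrix setup is sound and is in spirit close to what the paper does, but the argument breaks at exactly the point you flag as the ``main obstacle,'' and the proposed fix does not work. Knowing that $T_0,T_1$ generate a free semigroup gives you $2^N$ distinct \emph{matrices}; it says nothing about getting $2^{\Omega(N)}$ distinct \emph{traces}. Your suggested ``explicit check modulo one well-chosen small prime $p$'' cannot possibly certify this: over $\mathbb{F}_p$ there are only $p$ possible trace values, so a mod-$p$ computation can separate at most $p$ words, not exponentially many, and no finite computation with fixed gadgets can handle all $N$ at once. Since the cycle closure is precisely what forces the trace, and the trace collision problem is genuinely hard (beyond cyclic invariance there is no general mechanism preventing further coincidences), this is a real gap, not a detail.

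The paper sidesteps the trace issue entirely by never closing the chain into a cycle. It builds a path-like graph with a distinguished ``witness'' edge $e$ and tracks the \emph{vector} $\bigl(\tau(G\setminus e),\,\tau(G-e)\bigr)$; each gadget step multiplies this vector by one of the matrices $A=\begin{pmatrix}1&1\\0&1\end{pmatrix}$, $C=\begin{pmatrix}2&1\\1&1\end{pmatrix}$, and a direct decoding argument (compare coordinates, peel off one factor at a time) shows that distinct words give distinct vectors. The passage from exponentially many distinct $2$-vectors to exponentially many distinct scalars $\tau(G)$ is then a one-line pigeonhole: among $M$ distinct vectors, either $\sqrt{M}$ second coordinates are distinct, or some second coordinate repeats $\sqrt{M}$ times, forcing $\sqrt{M}$ distinct coordinate sums; either way one gets $\sqrt{M}$ distinct values of $\tau$ on $n$-vertex graphs. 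Finally, the resulting graphs have all degrees in $\{2,3\}$ (or $\{2,4\}$ for larger $k$), and regularity is restored at the end by a second pigeonhole on the number of degree-$2$ vertices followed by hanging a fixed pendant gadget off each of them, which multiplies every $\tau$ by the same constant. If you want to rescue your approach, the simplest modification is to replace the cycle by a path with fixed end-caps; then $\tau(G_x)$ becomes an entry of a vector $T_{x_1}\cdots T_{x_N}v$ rather than a trace, and you can run the same vector-decoding plus pigeonhole argument.
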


The exponential growth rate can be easily seen to be tight since, for a fixed $k$, the number of spanning trees in a $k$-regular connected $n$-vertex graph is at most $2^{O(n)}$ (see e.g.\ \cite{mckay1983spanning,alon1990number}). Hence, $\tau(G)$ can obtain at most $2^{O(n)}$ values on such graphs.

We note that a natural approach to understanding the growth rate of the tree spectrum, already introduced in 1970 by Sedl\'a\v{c}ek \cite{sed70}, is to consider a ``dual'' problem. Namely, given $t>2$, what is the minimum number of vertices in a planar graph with exactly $t$ spanning trees? Let us denote the answer by $\alpha(t)$. 
By considering a cycle on $t$ vertices we obtain a trivial upper bound of $\alpha(t)\le t$, observed by Sedl\'a\v{c}ek \cite{sed70} already in 1970. Following a number of subsequent improvements \cite{neb,AS13,aza14,CP24a}, the current state of the art bound of $\alpha(t) \le O(\log^{3/2}(n)/\log \log n)$ is due to a recent result of Stong \cite{stong} (a stronger bound in an
appropriate sense is known for multigraphs, see \cite{chan2024equality}). A natural conjecture which remains open, raised explicitly in \cite{CKP}, is that $\alpha(t) \le O(\log t)$. 
In \cite{CKP} the authors show, relying on the Bourgain-Kontorovich \cite{BK} machinery developed towards proving Zaremba's conjecture, 
that this conjecture is true for a positive proportion of integers smaller than any large $n$. We prove a natural modular analogue of the full conjecture. 
\begin{thm}\label{thm:modular-intro}
        For any $u \in \mathbb{Z}_N$, there exists a planar graph $G$ on $O(\log N)$ vertices with $\tau(G) \equiv u \bmod N$. 
\end{thm}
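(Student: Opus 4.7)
My plan is to work with series-parallel planar two-terminal graphs and track the pair $(\tau(H),F(H))$, where $\tau(H)$ counts spanning trees and $F(H)$ counts spanning $2$-forests in which the terminals $s,t$ lie in different components. A direct component-count argument shows that \emph{extending} the terminal via a new pendant edge $tt'$ (one new vertex) updates $(\tau,F)\mapsto(\tau,\tau+F)$, while \emph{closing} the terminals via the edge $st$, allowed only when $st\notin E(H)$ (no new vertex), updates $(\tau,F)\mapsto(\tau+F,F)$. These are exactly the actions of the classical $\SL_2(\mathbb{Z})$ generators
\[
M_S=\begin{pmatrix}1 & 0 \\ 1 & 1\end{pmatrix}, \qquad M_P=\begin{pmatrix}1 & 1 \\ 0 & 1\end{pmatrix}
\]
on the column vector $(\tau,F)^{\top}$. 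Starting from the single edge $st$ with state $(1,1)^{\top}$, any word $w$ in $M_S$ and $M_P$, alternated to maintain simplicity, produces a planar simple graph whose vertex count is $2$ plus the number of $M_S$-letters in $w$, and whose state is $w\cdot(1,1)^{\top}$.

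Modulo $N$, the matrices $M_S$ and $M_P$ still generate $\SL_2(\mathbb{Z}/N)$, and since they preserve $\gcd(a,b,N)$ one checks that the orbit of $(1,1)^{\top}$ is exactly $\{(a,b)\in(\mathbb{Z}/N)^{2}:\gcd(a,b,N)=1\}$. For any $u\in\mathbb{Z}/N$ a short application of the Chinese remainder theorem produces $b$ with $(u,b)$ in this orbit, so it suffices to realize the corresponding element of $\SL_2(\mathbb{Z}/N)$ by a word of length $O(\log N)$. The natural route is via the Euclidean algorithm on an integer lift of $(u,b)$: it yields a factorization $M_S^{a_1}M_P^{a_2}M_S^{a_3}\cdots$ where the $a_i$ are the partial quotients of a continued-fraction expansion. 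The number of blocks is $k=O(\log N)$, and the convergent recurrence forces $\prod_i a_i\le N$, hence $\sum_i\log a_i = O(\log N)$.

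The remaining step, which I expect to be the technical heart, is to implement each block $M_S^{a}$ (or $M_P^{a}$) \emph{modulo $N$} by a simple planar two-terminal gadget $D_{a}$ on $O(\log a)$ vertices rather than the naive path of length $a$ which uses $a$ vertices. Concretely, I need a simple planar two-terminal graph whose state is $\equiv(1,a)\pmod{N}$. I would attempt this recursively via a binary-doubling scheme: take $D_1$ to be a single edge and construct $D_{2a}$ and $D_{2a+1}$ from $D_a$ by a bounded number of series, parallel, and one-vertex wedge operations designed to double-and-increment the state modulo $N$. Because congruence modulo $N$ is required rather than exact matrix identity, there is substantial room to maneuver, and I expect an explicit recipe to work. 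Concatenating the $D_{a_i}$ in the order prescribed by the Euclidean decomposition then produces a simple planar graph on $\sum_{i}O(\log a_i)=O(\log\prod_{i}a_i)=O(\log N)$ vertices with $\tau\equiv u\pmod{N}$. The main obstacle is precisely the construction and verification of these compressed gadgets; without them the naive word $M_S^{a_1}M_P^{a_2}\cdots$ has length $\sum_i a_i$, which can be as large as $N$.
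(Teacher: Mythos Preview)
Your setup is essentially the same as the paper's: you track the pair $(\tau,F)$ under one-vertex operations, which act as the standard generators
\[
A=\begin{pmatrix}1&1\\0&1\end{pmatrix},\qquad D=\begin{pmatrix}1&0\\1&1\end{pmatrix}
\]
(your $M_P,M_S$), and you correctly observe that the task reduces to writing a target element of $\SL_2(\mathbb{Z}/N)$ as a word of length $O(\log N)$ in a bounded generating set. Where you diverge from the paper is in how you propose to obtain that short word, and this is where there is a genuine gap.

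The paper does \emph{not} go through the Euclidean algorithm. Instead it invokes the fact that the Cayley graph of $\SL_2(\mathbb{Z}/N)$ with respect to $\{AD,A^2D\}\cup\{(AD)^{-1},(A^2D)^{-1}\}$ is a $c$-expander for an absolute $c>0$ (a consequence of Selberg's $3/16$ theorem), then upgrades this to directed out-expansion, and concludes that every element is reachable from the identity by a word of length $O(\log|\SL_2(\mathbb{Z}/N)|)=O(\log N)$ in $\{AD,A^2D\}$ alone. Each letter costs at most two new vertices, so the resulting graph has $O(\log N)$ vertices. The short-word statement is thus obtained in one stroke from a deep spectral input.

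Your proposed alternative hinges entirely on the ``compressed gadgets'' $D_a$, and as you yourself flag, you have not constructed them. More to the point, the binary-doubling scheme you sketch does not work in the size budget you need. Series composition of two copies of a two-terminal graph with state $(\tau,F)$ indeed produces state $(\tau^2,2\tau F)$, so starting from $(1,1)$ and doubling gives $(1,2^k)$ --- but it also \emph{doubles the number of vertices} at each step, yielding $|D_a|=\Theta(a)$, not $O(\log a)$. To get $O(\log a)$ vertices you would need an operation that squares or doubles the state while adding only $O(1)$ vertices, and no such local planar operation on $(\tau,F)$ exists: the entries of the state vector for a planar graph on $m$ vertices are at most $6^m$, so a state with $F\equiv a\pmod N$ and $\tau\equiv 1\pmod N$ on $o(\log N)$ vertices forces $\tau=1$ exactly, i.e.\ a tree, and then $F$ is the length of the $s$--$t$ path, requiring $a$ vertices.

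Stepping back, what you are really trying to prove with the gadgets is that $M_S^a\bmod N$ is a word of length $O(\log a)$ in a fixed bounded generating set of $\SL_2(\mathbb{Z}/N)$. Summed over the Euclidean blocks this is exactly the statement that $\SL_2(\mathbb{Z}/N)$ has diameter $O(\log N)$, which is the whole theorem restated. No elementary proof of this is known; the known proofs all pass through expansion (Selberg, or for prime $N$ the Bourgain--Gamburd machinery). So the ``main obstacle'' you identify is not a technical detail to be filled in but is equivalent to the result itself, and the paper's route through Cayley-graph expansion is, as far as is currently known, unavoidable.
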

In fact, our result is in several ways even stronger, see \Cref{thm:modular} for more details.

While we do not use the connection to continued fractions which was used to prove the exponential growth of $|T(\gpla{n})|$ in \cite{CKP}, we do obtain 
several interesting (simple) consequences in the opposite direction. 
Here, given a sequence of integers $a_1, \ldots, a_\ell \geq 1 $, where $\ell \geq 0$,
the corresponding \emph{continued fraction} is defined as follows:
\[ [a_1,\ldots, a_\ell] := \cfrac{1}{a_1 + \cfrac{1}{\ddots + \frac{1}{a_\ell}}}.
\]

The classical Zaremba's conjecture \cite{zaremba} asserts that there exists $C>0$ such that for any $u$ there is a coprime $t<u$ such that $\frac{t}{u}=[a_1,\ldots, a_{\ell}]$ with $a_1,\ldots,a_{\ell} \le C$. It is known that the conjecture is not true for $C= 4$ but Hensley \cite{hen} 
conjectured in 1996 that $C=2$ is enough for large enough $u$. While the conjecture is still open in general, in a remarkable paper Bourgain and Kontorovich \cite{BK} showed that it holds for $(1-o(1))n$ of the values of $u$ smaller than $n$ for any large enough $n$. 
We give a very simple proof of the following weaker result (ignoring the $a_i \in \{1,2\}$ part). 

\begin{thm}\label{thm:zaremba-weak}
    For any $N$, there are at least $N^{1/4}/2-1$ values of $u<N$ such that there exists a coprime $t<u$, an $\ell \le \frac12{\log N}$ and  $a_1,\ldots, a_\ell \in \{1,2\}$, such that
    $\frac{t}{u}=[a_1,1,\ldots, a_{\ell}, 1].$
\end{thm}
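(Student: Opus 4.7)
The plan is to read Theorem~\ref{thm:zaremba-weak} off of (the simpler version of) Theorem~\ref{thm:planar}. In the one-page construction giving $|T(\gpla{n})|\ge 2^{n/2-1}$, to each sequence $a\in\{1,2\}^\ell$ one attaches a simple planar graph $G_a$ on $\Theta(\ell)$ vertices for which $\tau(G_a)$ is precisely the denominator $q_{2\ell}(a)$ of the continued fraction $[a_1,1,a_2,1,\ldots,a_\ell,1]$; the bound then says that among these $2^\ell$ sequences, at least $2^{\ell-1}$ produce pairwise distinct denominators.

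Given $N$, I would choose $\ell:=\lceil\tfrac14\log_2 N\rceil$, so that $2^{\ell-1}\ge N^{1/4}/2$ (hence $\ge N^{1/4}/2-1$) while $\ell\le\tfrac12\log N$ for $N$ large and either standard base of $\log$. The recurrence $q_k=c_kq_{k-1}+q_{k-2}$ applied with $c_{2i-1}=a_i\in\{1,2\}$ and $c_{2i}=1$ gives $q_{2\ell}(a)\le(2+\sqrt 3)^\ell$ for every $a$; the worst case $a=(2,\ldots,2)$ corresponds to $\ell$ copies of $M(2)M(1)=\bigl(\begin{smallmatrix}3&2\\1&1\end{smallmatrix}\bigr)$, whose leading eigenvalue is $2+\sqrt 3$. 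With this choice of $\ell$ we get $q_{2\ell}(a)=O(N^{\log_2(2+\sqrt 3)/4})<N$, so every resulting $u$ lies below $N$.

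Standard continued-fraction theory supplies the remaining conditions essentially for free: the identity $\det\bigl(\begin{smallmatrix}c&1\\1&0\end{smallmatrix}\bigr)=-1$, iterated over the even length $2\ell$, gives $p_{2\ell-1}q_{2\ell}-p_{2\ell}q_{2\ell-1}=1$, forcing $\gcd(p_{2\ell}(a),q_{2\ell}(a))=1$; and since $[a_1,1,\ldots,a_\ell,1]\in(0,1)$, the numerator $t=p_{2\ell}(a)$ is less than the denominator $u=q_{2\ell}(a)$. Hence each of the $\ge N^{1/4}/2-1$ distinct denominators handed to us by Theorem~\ref{thm:planar}'s construction comes with a coprime $t<u$ of the required form. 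The only real obstacle is the identification $\tau(G_a)=q_{2\ell}(a)$ between the planar construction and continued fractions with digits in $\{1,2\}$; once this is read off from the construction proving Theorem~\ref{thm:planar}, Theorem~\ref{thm:zaremba-weak} reduces to the above unpacking of parameters.
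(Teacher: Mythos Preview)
Your proposal has a genuine gap at the key quantitative step. You assert that ``among these $2^\ell$ sequences, at least $2^{\ell-1}$ produce pairwise distinct denominators,'' reading this off from Corollary~\ref{cor:sqrt(2)}. But that is not what the one-page construction proves. Theorem~\ref{thm:simple} shows that the $2^n$ \emph{vectors} $\vec u(a_1,\dots,a_t)$ are distinct, and then Lemma~\ref{lem:ramsey} extracts from $N$ distinct vectors only $\sqrt N$ distinct first coordinates (or distinct sums). So from your $2^\ell$ sequences in $\{1,2\}^\ell$ you are only guaranteed $2^{\ell/2}$ distinct denominators, not $2^{\ell-1}$. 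With your choice $\ell=\lceil\tfrac14\log_2 N\rceil$ this yields roughly $N^{1/8}$, not $N^{1/4}$.

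The paper's proof closes this gap by taking $n=\lfloor\tfrac12\log N\rfloor$ and using all $\ell\le n$ with $a_2,\dots,a_\ell\in\{1,2\}$ (and $a_1=1$), producing $2^n-1$ distinct vectors; the crude bound $u\le 4^n\le N$ keeps the denominators below $N$, and then Lemma~\ref{lem:ramsey} gives $2^{n/2}-1\ge N^{1/4}/2-1$ distinct values. Your sharper estimate $q_{2\ell}\le(2+\sqrt3)^\ell$ is correct but irrelevant once $\ell$ is pushed up to $\tfrac12\log N$. You also omit the second case of the Ramsey dichotomy: Lemma~\ref{lem:ramsey} may hand you distinct sums $t+u$ rather than distinct $u$'s, and one needs the extra multiplication by $A$ (this is why the paper reserves $a_1=1$) to convert these into admissible first coordinates while keeping all partial quotients in $\{1,2\}$.
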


\Cref{thm:modular-intro} (or rather its strengthening, \Cref{thm:modular}) 
has as an essentially immediate consequence the following ``modular'' version of Zaremba's Conjecture. We note that a similar result is described in \cite{zaremba-survey}. 

    \begin{thm}
        Let $N$ be large enough. For any $u<N$ 
	    there exist $\ell \le O(\log N)$ and $a_1,\ldots, a_{\ell} \in \{1,2\}$ such that\footnote{
We note here that one should first compute $[a_1,1,\ldots, a_{\ell}, 1]$ over $\mathbb Q$ and only then evaluate it $\bmod N$.}
        $u\equiv [a_1,1,\ldots, a_{\ell}, 1] \bmod N.$
    \end{thm} 
\textbf{Notation.}
All our logarithms are in base two and all our graphs are simple unless otherwise specified. 

\vspace{-0.3cm}
\section{Counting spanning trees}
\vspace{-0.3cm}
Given a graph $G$ and its edge $e$ we denote by $G / e$ the (multi)-graph obtained by contracting $e$ and by $G-e$ the graph obtained by deleting $e$.
Note that $\tau(G-e)$ counts the number of spanning trees of $G$ which do not use $e$, and $\tau(G / e)$ counts the number of spanning trees using $e$. In particular, this establishes the classical recursive formula $\tau(G)=\tau(G / e)+\tau(G-e)$ for computing the number of spanning trees.

We say that a vector $\colvec{2}{t}{u}$ is \emph{$n$-planar-feasible} if there exists a planar graph $G$ on up to $n$ vertices and an edge $e \in E(G)$ such that $t=\tau(G / e)$ and $u=\tau(G-e)$. We say that $(G,e)$ is a \emph{witness} for the $n$-planar-feasibility of $\colvec{2}{t}{u}$. 
The edge $e$ is called a {\em witness edge}.
Note that if a vector is $n$-planar-feasible then it can also be realized by a planar graph with {\em exactly} $n$ vertices, by attaching leaves (which does not change the number of spanning trees).

The next simple lemma shows that in order to get good lower bounds on  $|\mathcal{T}(n)|$ it suffices to show that there are many distinct planar feasible vectors. 

\begin{lem}\label{lem:ramsey}
    If there are at least $N$ distinct $n$-planar-feasible vectors, then $|\mathcal{T}(n)| \ge \sqrt{N}.$
\end{lem}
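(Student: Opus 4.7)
The plan is to show that each $n$-planar-feasible vector $\colvec{2}{t}{u}$ encodes \emph{two} values of $\tau$ realized by planar graphs on $n$ vertices, namely $u$ itself and $t+u$, and that different feasible vectors give different such pairs. The bound $|\mathcal{T}(n)| \ge \sqrt N$ then follows from pigeonhole on $\mathcal{T}(n) \times \mathcal{T}(n)$.

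In more detail, let $\colvec{2}{t_i}{u_i}$, $i = 1, \ldots, N$, be distinct $n$-planar-feasible vectors, with witness graphs $G_i$ and witness edges $e_i \in E(G_i)$. By the definition of feasibility and the remark just before the lemma, each $G_i$ may be taken to be a planar graph on exactly $n$ vertices (padding with leaves does not affect $\tau$). The first key observation is that the planar graph $G_i$ itself has $\tau(G_i) = \tau(G_i \setminus e_i) + \tau(G_i - e_i) = t_i + u_i$ by the deletion-contraction identity stated at the start of Section~2, so $t_i + u_i \in \mathcal{T}(n)$. The second observation is that $G_i - e_i$ is also a planar graph on $n$ vertices, and $\tau(G_i - e_i) = u_i$, so $u_i \in \mathcal{T}(n)$ as well.

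Hence the assignment
\[
\colvec{2}{t_i}{u_i} \;\longmapsto\; (u_i,\; t_i + u_i) \in \mathcal{T}(n) \times \mathcal{T}(n)
\]
maps the collection of $N$ feasible vectors into $\mathcal{T}(n)^2$. This map is injective, because one can recover $u_i$ and then $t_i = (t_i+u_i) - u_i$ from the pair on the right. Consequently $N \le |\mathcal{T}(n)|^2$, which rearranges to $|\mathcal{T}(n)| \ge \sqrt N$.

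There is no real obstacle here; the whole content of the lemma is the double-use of each feasible vector via deletion-contraction, and the injectivity is automatic. The only point to handle carefully is that $u_i$ (and $t_i+u_i$) are realized by \emph{simple} planar graphs on $n$ vertices, which is exactly what the padding-by-leaves comment preceding the lemma guarantees.
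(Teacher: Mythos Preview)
Your proof is correct and uses essentially the same idea as the paper: both arguments exploit that for each feasible vector $\colvec{2}{t}{u}$, the numbers $u=\tau(G-e)$ and $t+u=\tau(G)$ lie in $\mathcal{T}(n)$. The paper phrases this as a case split (either the second coordinates take $\ge\sqrt N$ values, or $\ge\sqrt N$ vectors share a second coordinate and hence have distinct sums), whereas you package it as an injection into $\mathcal{T}(n)\times\mathcal{T}(n)$; these are equivalent, and your formulation is arguably a bit cleaner.
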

  \begin{proof}
        Let $\colvec{2}{x_1}{y_1},\ldots, \colvec{2}{x_N}{y_N}$ be distinct $n$-planar-feasible vectors.
        Note that either there is some $y$ such that $y_i=y$ for at least $\sqrt{N}$ of these vectors, or $|\{y_1,\ldots,y_N\}| \ge \sqrt{N}$. In the latter case we are done since each $y_i$ denotes the number of spanning trees of some $G-e$ for some $n$-vertex planar graph $G$ and its edge $e$, which is also an $n$-vertex planar graph. In the former case, note that $|\{x_1+y_1,\ldots,x_N+y_N\}| \ge \sqrt{N}$ since all the (at least $\sqrt{N}$) vectors agreeing in the second coordinate must have a different sum of their coordinates (since they are different vectors). Since $x_i+y_i$ is the number of spanning trees of some $n$-vertex planar graph, we again get at least $\sqrt{N}$ distinct values taken by the spanning tree function.  
    \end{proof}
    
The following lemmas establish a way of recursively generating planar-feasible vectors. Similar considerations were also used in \cite{CKP}.

\begin{lem}\label{lem:subdivide}
    If $\colvec{2}{t}{u}$ is $n$-planar-feasible then $\colvec{2}{t+u}{u}=\begin{pmatrix}
        1 & 1\\
        0 & 1
    \end{pmatrix}\colvec{2}{t}{u}$ is $(n+1)$-planar-feasible.
\end{lem}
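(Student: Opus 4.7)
The plan is to use the most natural planarity-preserving edge operation: subdividing the witness edge.

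Let $G$ be a planar graph on at most $n$ vertices with an edge $e=xy$ witnessing the $n$-planar-feasibility of $\colvec{2}{t}{u}$, so $\tau(G \setminus e) = t$ (contraction) and $\tau(G - e) = u$ (deletion). I would construct $G'$ from $G$ by subdividing $e$: introduce one new vertex $z$ and replace the edge $xy$ by the two edges $xz$ and $zy$. Then $G'$ is planar (subdivision preserves planarity) and has at most $n+1$ vertices. I propose to take $e' = xz$ as the witness for $(n+1)$-planar-feasibility of $\colvec{2}{t+u}{u}$.

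The verification then reduces to two straightforward identities. First, $G' \setminus e'$ is obtained by contracting $xz$, which merges $z$ into $x$ and turns the edge $zy$ into an edge between $x$ and $y$; as $G$ is simple and $e$ was the unique $xy$-edge, this reconstructs $G$ exactly, giving $\tau(G' \setminus e') = \tau(G) = \tau(G \setminus e) + \tau(G - e) = t + u$ by the deletion–contraction recursion stated earlier. Second, in $G' - e'$ the vertex $z$ is a leaf attached to $y$ by $zy$, so every spanning tree of $G' - e'$ must use $zy$, and the bijection ``delete $z$'' with the spanning trees of $G - e$ gives $\tau(G' - e') = \tau(G - e) = u$.

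I do not anticipate any real obstacle: the only point requiring a moment of care is checking that contracting $xz$ in $G'$ genuinely returns $G$ rather than a multigraph, which is immediate because $G$ being simple means the edge $xy$ we originally subdivided was the only one between $x$ and $y$. The lemma then follows directly, and this construction is precisely the graph-theoretic avatar of applying the matrix $\begin{pmatrix} 1 & 1 \\ 0 & 1 \end{pmatrix}$ to the feasibility vector.
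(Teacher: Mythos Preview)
Your proof is correct and uses the same construction as the paper: subdivide the witness edge and take one of the two new edges as the new witness. The only cosmetic difference is in verifying $\tau(G'\setminus e')=t+u$: the paper splits spanning trees of $G'$ containing $e'$ according to whether they also contain the other new edge, whereas you observe directly that contracting $e'$ recovers $G$ and then invoke $\tau(G)=t+u$; these are equivalent one-line computations.
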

\begin{proof}
    Take a witness $(G,e)$ for $\colvec{2}{t}{u}$ and subdivide $e$ with a single vertex to create a new graph $G'$. Let $f,f'$ be the two newly created edges. 
    Note that $G'/f$ is isomorphic to $G$, so $\tau(G'/f) = \tau(G) = t + u$.
Also, $\tau(G' - f)$ counts the number of spanning trees of $G'$ not using $f$ (forcing us to use $f'$), which equals $\tau(G-e)=u$. Thus, $(G',f)$ is a witness for $\colvec{2}{t+u}{u}$ being $(n+1)$-planar-feasible, as desired.
\end{proof}

\begin{lem}\label{lem:triangle}
    If $\colvec{2}{t}{u}$ is $n$-planar-feasible then $\colvec{2}{2t}{t+2u}=\begin{pmatrix}
        2 & 0\\
        1 & 2
    \end{pmatrix}\colvec{2}{t}{u}$ is $(n+1)$-planar-feasible.
\end{lem}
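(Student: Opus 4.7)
The natural construction, mirroring the "subdivide" operation of the previous lemma but using a triangle instead, is to add a new vertex $v$ to the witness graph $G$ and connect it to both endpoints of the witness edge $e = ab$, thereby creating a triangle on $\{a,b,v\}$. Call the resulting graph $G'$. Planarity is preserved: fix a planar embedding of $G$, pick either face incident to $e$, place $v$ in it, and draw the edges $av, bv$ inside that face. The vertex count increases by exactly one, so any decomposition we prove will establish $(n+1)$-planar-feasibility.

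The plan is to use the original edge $f := e = ab$ as the new witness and verify the two required identities directly from the deletion/contraction interpretations stated in the section preamble.

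For $\tau(G'-f) = t + 2u$: in $G' - e$ the vertex $v$ has degree exactly two, with neighbors $a$ and $b$. Split spanning trees $T'$ of $G'-e$ according to how many of the edges $av, bv$ they use. If $T'$ uses both $av$ and $bv$, then $T' \setminus \{av,bv\}$ is a $2$-component spanning forest of $G-e$ separating $a$ from $b$; adjoining $e$ turns it into a spanning tree of $G$ using $e$, and this correspondence is a bijection, contributing $\tau(G\setminus e) = t$. If $T'$ uses exactly one of $av, bv$ (say $av$), then $T' \setminus \{av\}$ is a spanning tree of $G - e$, contributing $\tau(G-e) = u$; the symmetric case contributes another $u$. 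Summing gives $t + 2u$.

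For $\tau(G'/f) = 2t$: contracting $e = ab$ identifies $a$ and $b$ into a single vertex $w$, and the two edges $av, bv$ become a pair of parallel edges between $w$ and $v$ in $G'/e$. Any spanning tree of $G'/e$ must include $v$, which now has degree two via these parallel edges, and any spanning tree uses exactly one of them (using both creates a length-$2$ cycle). Deleting that chosen parallel edge produces a spanning tree of $G/e$, and conversely each spanning tree of $G/e$ lifts in exactly two ways. Hence $\tau(G'/e) = 2\tau(G/e) = 2\tau(G \setminus e) = 2t$.

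Combining the two identities, $f = e$ is a witness for $\colvec{2}{2t}{t+2u}$ in $G'$, which completes the proof. I do not anticipate any real obstacle; the only point that needs a second's thought is that using the original edge $e$ (rather than one of the two new edges $av, bv$) as the new witness is what makes the arithmetic come out to $(2t, t+2u)$ rather than the asymmetric $(2t+u, t+u)$ one would get by choosing $av$ as the witness.
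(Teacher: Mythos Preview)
Your proof is correct and follows essentially the same approach as the paper: the same construction (add a vertex $v$ adjacent to both endpoints of $e$), the same witness edge $e$, and the same case split on how many of the two new edges a spanning tree uses. Your write-up is slightly more detailed (explicit planarity check, explicit bijections), and your closing remark anticipating the $(2t+u,\,t+u)$ outcome from choosing $av$ as witness is exactly the content of the paper's next lemma.
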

\begin{proof}
    Take a witness $(G,e)$ for $\colvec{2}{t}{u}$, add an extra vertex $v$ and join it to both endpoints of $e$ to create a new graph $G'$. We claim that $(G',e)$ is a witness for the $(n+1)$-planar-feasibility of $\colvec{2}{2t}{t+2u}.$ Indeed, $\tau(G' / e)=2\tau(G / e)=2t$ since we have to pick one of the two new edges to connect $v$ as well as connect $G / e$. For $\tau(G' - e)$, we can either use exactly one of the two new edges giving a contribution of $2\tau(G - e)= 2u$, or both new edges giving a contribution of $\tau(G / e)=t$. Hence, $\tau(G' - e) = t + 2u$, as desired. 
\end{proof}

\begin{lem}\label{lem:triangle-2}
    If $\colvec{2}{t}{u}$ is $n$-planar-feasible then $\colvec{2}{2t+u}{t+u}=\begin{pmatrix}
        2 & 1\\
        1 & 1
    \end{pmatrix}\colvec{2}{t}{u}$ is $(n+1)$-planar-feasible.
\end{lem}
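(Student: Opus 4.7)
The plan is to adapt the construction from \Cref{lem:triangle}, but instead of keeping $e$ as the witness, take one of the newly added edges as the witness. Concretely, let $e=xy$ be the witness for the $n$-planar-feasibility of $\colvec{2}{t}{u}$, so $\tau(G\setminus e)=t$ and $\tau(G-e)=u$. Form $G'$ from $G$ by adding a new vertex $v$ joined to both $x$ and $y$; this preserves planarity (place $v$ inside a face incident to $e$) and increases the vertex count by one. Now take the new witness to be $f:=vx$.

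First I would compute $\tau(G'-f)$. After deleting $f$, the vertex $v$ is a leaf attached only to $y$, so every spanning tree of $G'-f$ must use the pendant edge $vy$, and what remains is exactly a spanning tree of $G$. Hence
\[
\tau(G'-f)=\tau(G)=\tau(G\setminus e)+\tau(G-e)=t+u,
\]
giving the second coordinate.

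Next I would compute $\tau(G'\setminus f)$. Contracting $f=vx$ identifies $v$ with $x$, and the edge $vy$ becomes a second edge parallel to $e$; call the two parallel copies $e$ and $e'$. A spanning tree of the resulting (multi)graph uses at most one of $\{e,e'\}$, so we split according to which copy is used: using neither contributes $\tau(G-e)=u$; using $e$ (and not $e'$) contributes $\tau(G\setminus e)=t$; using $e'$ (and not $e$) contributes $t$ again, by symmetry with the previous case. Altogether
\[
\tau(G'\setminus f)=2t+u,
\]
which is the first coordinate. Therefore $f$ witnesses that $\colvec{2}{2t+u}{t+u}$ is $(n+1)$-planar-feasible, as required.

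There is no real obstacle here: once one notices that the correct witness in the ``add an apex over $e$'' construction is a \emph{new} edge rather than $e$ itself, the counting reduces to a routine deletion-contraction on the triangle $xvy$, with the multiplicity-$2$ coming from the symmetry between $e$ and its parallel copy after contraction.
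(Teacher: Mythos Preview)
Your proof is correct and follows essentially the same approach as the paper: both add a vertex $v$ adjacent to the endpoints of $e$, take one of the new edges as the witness $f$, observe that deleting $f$ leaves $v$ as a pendant so $\tau(G'-f)=\tau(G)=t+u$, and observe that contracting $f$ doubles the edge $e$ so $\tau(G'\setminus f)=2t+u$. Your case split for the doubled-edge count is slightly more explicit than the paper's one-line statement, but the argument is the same.
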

\begin{proof}
    Take a witness $(G,e)$ for $\colvec{2}{t}{u}$, add an extra vertex $v$ and join it to both endpoints of $e$ to create a new graph $G'$. Denote the two new edges by $f,f'$. We claim that $(G',f)$ is a witness for the $(n+1)$-planar-feasibility of $\colvec{2}{2t}{t+2u}.$ Indeed, $\tau(G' / f)$ is equal to the number of spanning trees of $G$ with the edge $e$ doubled (i.e., replaced with two parallel edges), which equals to 
    $2\tau(G / e)+\tau(G-e)=2t+u$. Furthermore, $\tau(G' - f)=\tau(G)=t+u,$ because every spanning tree of $G'$ not using $f$ is obtained from a spanning tree of $G$ by adding $f'$. This proves the lemma.
\end{proof}

Let us denote by 
$$A:=\begin{pmatrix}
        1 & 1\\
        0 & 1
    \end{pmatrix}
    \quad\quad\quad
    B:=\begin{pmatrix}
        2 & 0\\
        1 & 2
    \end{pmatrix}
    \quad\quad\quad    
    C:=\begin{pmatrix}
        2 & 1\\
        1 & 1
    \end{pmatrix}
    \quad\quad\quad
    D:=\begin{pmatrix}
        1 & 0\\
        1 & 1
    \end{pmatrix}$$ 

We start with a weaker result which only makes use of the operations corresponding to the matrices $A$ and $C$.
\begin{thm}\label{thm:simple}
    There are at least $2^{n}$ distinct $(n+2)$-planar-feasible vectors.
\end{thm}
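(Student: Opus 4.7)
The plan is to construct $2^n$ distinct $(n+2)$-planar-feasible vectors by applying sequences of $n$ operations from $\{A,C\}$ (supplied by Lemmas~\ref{lem:subdivide} and~\ref{lem:triangle-2}) to a carefully chosen base vector, and then to show that different sequences yield different vectors. For the base I would take $v_0 := \colvec{2}{1}{0}$, which is $2$-planar-feasible via the unique edge of $K_2$: contracting that edge leaves a single vertex with $\tau=1$, while deleting it leaves two isolated vertices with $\tau=0$. Since each application of $A$ or $C$ adds one vertex, any word $W \in \{A,C\}^n$ acting on $v_0$ produces an $(n+2)$-planar-feasible vector, so it suffices to show that the map $W \mapsto W v_0$ is injective on $\{A,C\}^n$.

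The injectivity I would establish via a decoding rule based on the ratio $r:=t/u$. A direct calculation shows that for any vector $\colvec{2}{t}{u}$ with $t\geq u\geq 1$, the vector $A\colvec{2}{t}{u}$ has ratio $r+1\geq 2$ whereas $C\colvec{2}{t}{u}$ has ratio $(2r+1)/(r+1)<2$; both operations preserve the cone $t\geq u\geq 1$ and strictly increase $t+u$. Thus from any output vector with $u\geq 1$ the last operation is read off from whether the ratio is $\geq 2$ or $<2$, and inversion by $A^{-1}$ or $C^{-1}$ returns a strictly smaller vector still in the cone. Iterating gives a unique recovery of any word $U\in\{A,C\}^*$ applied to $\colvec{2}{2}{1}$.

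Injectivity of $W\mapsto Wv_0$ then follows: each $W\in\{A,C\}^n$ is either $A^n$, mapping to $v_0$ itself (distinguished from all other outputs by having $u=0$, since once we apply any $C$ we get $u\geq 1$ and this persists under further operations), or of the form $A^{j-1} C W''$ with $W''\in\{A,C\}^{n-j}$, in which case $Wv_0 = W''\cdot\colvec{2}{2}{1}$, from which $W''$ (and hence $j$) is recovered by the decoding above. Thus all $2^n$ words produce distinct $(n+2)$-planar-feasible vectors.

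I expect no serious obstacle. The only subtle point is to start at the degenerate base $\colvec{2}{1}{0}$ rather than at the more natural $\colvec{2}{2}{1}$ coming from the triangle: since $A$ acts trivially on $\colvec{2}{1}{0}$, the all-$A$ word contributes exactly one extra output, which is precisely what upgrades the count from $2^{n-1}$ to $2^n$.
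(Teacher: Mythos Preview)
Your approach is essentially the paper's: both amount to showing that the $2^n$ vectors $\{W v_0 : W \in \{A,C\}^n\}$ are pairwise distinct (the paper parametrizes the same set as $A^{a_1}C\cdots A^{a_t}C\,v_0$ with $\sum a_i + t \le n$, rewrites $C=AD$, and decodes by comparing coordinates; your ratio test is an equivalent decoding). There is, however, a slip in your reduction step. As written, for $W = A^{j-1}CW''$ one has $Wv_0 = A^{j-1}C(W''v_0)$, which is \emph{not} $W''\colvec{2}{2}{1}$ in general (e.g.\ $W=ACC$ gives $\colvec{2}{8}{3}\neq\colvec{2}{5}{3}$); it is the trailing $A$'s in the matrix product, not the leading ones, that act trivially on $v_0$. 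The fix is to decompose instead as $W = W''\,C\,A^{j-1}$ (with $j-1$ the number of trailing $A$'s), so that $Wv_0 = W''Cv_0 = W''\colvec{2}{2}{1}$; your decoding then recovers $W''$ uniquely over all of $\{A,C\}^{*}$, hence $j = n - |W''|$, hence $W$. With this correction the argument goes through. Your closing remark is also slightly off: the gain over starting at $\colvec{2}{2}{1}$ with $n-1$ operations is not a single extra vector but the fact that $W''$ now ranges over all of $\{A,C\}^{\le n-1}$, giving $2^n-1$ outputs, with $v_0$ itself (from $W=A^n$) supplying the last one.
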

\begin{proof}
    Note that $\colvec{2}{1}{0}$ is $2$-planar-feasible. This implies that for any choice of $t$ and $a_1,\ldots a_t \ge 0$, by repeated use of \Cref{lem:subdivide,lem:triangle-2}, the vector  
    \begin{equation}\label{eq:u}
    \vec{u}(a_1,\ldots,a_t):=A^{a_1}CA^{a_2}C\cdots A^{a_t}C \colvec{2}{1}{0}=A^{a_1+1}DA^{a_2+1}D\cdots A^{a_t+1}D \colvec{2}{1}{0}
    \end{equation}
    is $(n+2)$-planar-feasible for $n = a_1+a_2+\ldots+a_t+t$.
    
    We further claim that all vectors $\vec{u}=\vec{u}(a_1,\ldots,a_t)$ are distinct. 
    To see this, we need the following simple fact: for every vector $v = \colvec{2}{x}{y}$ of positive numbers, $Av = \colvec{2}{x+y}{y}$ has its first coordinate larger than the second, whereas $Dv = \colvec{2}{x}{x+y}$ has its second coordinate larger than the first. 
    Now, given $\vec{u}$ as in \eqref{eq:u}, we can determine $a_1$ by multiplying the vector by $A^{-1}$ so long as the first coordinate is larger than the second. The number of times we did this is equal to $a_1+1$, because $DA^{a_2+1}D\cdots A^{a_t+1}D \colvec{2}{1}{0}$ has its second coordinate larger than the first. We then multiply by $D^{-1}$. Repeating allows us to decode $a_2,\ldots, a_t$ in the same way, where the argument stops once the current vector equals $\colvec{2}{1}{0}$ and the number of iterations gives $t$.

    Hence, for a given $n$, the number of vectors we can obtain is the number of solutions to $a_1+a_2+\ldots+a_t+t\le n$ in non-negative integers (with variable $t$). By the usual ``balls and bins'' argument, this equals $\sum_{t=0}^n \binom{n}{t}=2^n,$ as desired.
    \end{proof}

    We note that the construction we used above is equal to the one used in \cite{CKP}. We also note that the uniqueness of vectors $\vec{u}(a_1,\ldots,a_t)$ can also be concluded from the classical fact that the matrices $A,D$ generate a free semigroup \cite{free-semigroup} (although they do not generate a free group). The following corollary is an immediate consequence of \Cref{thm:simple} and \Cref{lem:ramsey}. 

    \begin{cor}\label{cor:sqrt(2)}
        For any positive integer $n$ we have $|T(\gpla{n})| \ge 2^{n/2-1}.$ 
    \end{cor}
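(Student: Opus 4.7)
The plan is to prove the corollary by simply chaining the two immediately preceding results with an appropriate index shift. Both heavy components are already in place: \Cref{thm:simple} supplies a combinatorial construction that produces exponentially many distinct planar-feasible vectors, while \Cref{lem:ramsey} translates a lower bound on the number of planar-feasible vectors into a lower bound on the number of distinct values of $\tau$ on planar $n$-vertex graphs, at the cost of taking a square root.

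Concretely, I would argue as follows. Fix $n$ and first dispose of the trivial small cases $n \le 2$: here $2^{n/2 - 1} \le 1$, and the bound follows from the single planar graph on $n$ vertices realizing some value of $\tau$. For $n \ge 2$, apply \Cref{thm:simple} with its parameter taken to be $n-2$ in place of $n$. This yields at least $2^{n-2}$ distinct $n$-planar-feasible vectors. Feeding this count into \Cref{lem:ramsey} (with $N = 2^{n-2}$), we obtain
\[
|T(\gpla{n})| \;\ge\; \sqrt{2^{n-2}} \;=\; 2^{n/2 - 1},
\]
which is exactly the claimed bound.

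The main obstacle is\textellipsis there really is none: the corollary is advertised as an immediate consequence, and the proof should read as a one-liner citing \Cref{thm:simple} and \Cref{lem:ramsey}. The only thing to double-check is the index-shift bookkeeping, namely that \Cref{thm:simple} is stated for $(n+2)$-planar-feasible vectors (so we invoke it at $n-2$) and that the small-$n$ cases where the exponent would become negative are handled trivially. Given this, I would write the proof of \Cref{cor:sqrt(2)} in a single sentence combining the two earlier results.
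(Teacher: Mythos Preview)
Your proposal is correct and matches the paper's approach exactly: the paper states the corollary as an immediate consequence of \Cref{thm:simple} and \Cref{lem:ramsey} without giving a written proof, and your index-shift and square-root computation is precisely the intended one-liner.
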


    We next prove a stronger bound on the number of planar-feasible vectors using the operations behind all three matrices $A,B,$ and $C$. We note that we did not choose the optimal value of the parameters in the following result for the sake of simplicity, since even after optimization the base of the exponent here is extremely unlikely to be best possible and only does slightly better than Theorem \ref{thm:simple}. The main purpose of the stronger theorem here is to showcase that one can substantially improve \Cref{thm:simple} by using additional operations and to showcase a route towards further improvement.
    
\begin{thm}\label{thm:stronger}
    There are at least
    $\lfloor \frac{1}{2\sqrt{2}} (1+\sqrt{2})^{n+1} \rfloor$
    distinct $(n+2)$-planar-feasible vectors.
\end{thm}
\begin{proof}
    Let $\mathcal{M}$ be the set of matrices of the form 
    $A^a C$ or $B^a C$ for $a \geq 0$. The weight $w(M)$ of such a matrix $M \in \mathcal{M}$ is $a+1$. For $M_1,\dots,M_t \in \mathcal{M}$, let
    \begin{equation}\label{eq:vec ABC}
    \vec{v}(M_1,\dots,M_t) := M_1 \dots M_t \colvec{2}{1}{0}
    \end{equation}
    By repeated use of \Cref{lem:subdivide,lem:triangle,lem:triangle-2}, $\vec{v}(M_1,\dots,M_t)$ is 
    $(n+2)$-planar-feasible for $n = w(M_1) + \dots + w(M_t)$.

    Next, let us show that all vectors obtained in this way are distinct. We claim that for a vector $\colvec{2}{x}{y}$ with $x > 0, y \geq 0$ and for a matrix $M \in \mathcal{M}$, knowing $\colvec{2}{u}{v} := M \colvec{2}{x}{y}$ allows us to infer $M$. This implies that $\vec{v}(M_1,\dots,M_t)$ determines $M_1,\dots,M_t$ (by first finding $M_1$, then $M_2$, etc.), and so the above vectors are distinct. 
    Note that
    $$
    A^a C \colvec{2}{x}{y} = 
    \begin{pmatrix}
        1 & a\\
        0 & 1
    \end{pmatrix}
    \begin{pmatrix}
        2 & 1\\
        1 & 1
    \end{pmatrix}
    \colvec{2}{x}{y} = 
    \colvec{2}{(2+a)x+(1+a)y}{x+y},
    $$
    and so $\colvec{2}{u}{v} = A^a C \colvec{2}{x}{y}$ satisfies $\frac{u}{v} \in (1+a,2+a]$. Similarly,
    $$
    B^a C \colvec{2}{x}{y} = 2^{a-1} \cdot 
    \begin{pmatrix}
        2 & 0\\
        a & 2
    \end{pmatrix}
    \begin{pmatrix}
        2 & 1\\
        1 & 1
    \end{pmatrix}
    \colvec{2}{x}{y} = 
    2^{a-1} \colvec{2}{4x+2y}{(2a+2)x+(a+2)y},
    $$
    and so 
    $\colvec{2}{u}{v} = B^a C \colvec{2}{x}{y}$ satisfies $\frac{u}{v} \in (\frac{2}{a+2},\frac{2}{a+1}]$. Since the intervals $(1+a,2+a]$ for $a \geq 0$ and 
    $(\frac{2}{a+2},\frac{2}{a+1}]$ for $a \geq 1$ are pairwise disjoint, knowing $\colvec{2}{u}{v} = M \colvec{2}{x}{y}$ for $M \in \mathcal{M}$ allows us to infer $M$, as claimed.

    It remains to estimate the number of sequences $(M_1,\dots,M_t)$ with $M_1,\dots,M_t \in \mathcal{M}$ and $w(M_1) + \dots + w(M_t) \leq n$. For $k \geq 0$, the number of matrices $M \in \mathcal{M}$ of weight $k$ is $1$ if $k=1$ (namely, $A^0C = B^0C$) and $2$ if $k \geq 2$ (i.e., $A^{k-1}C, B^{k-1}C$). 
    The generating function of this sequence (i.e. the sequence $a_k = 1$ for $k=1$ and $a_k = 2$ for $k \geq 2$) is $f(x) = 2\sum_{k=2}^\infty x^k + x = \frac{x(1+x)}{1-x}$.  
    Hence, by a standard fact about generating functions, the generating function for the number of sequences
    $M_1,\dots,M_t \in \mathcal{M}$ with 
    $w(M_1) + \dots + w(M_t) = n$ (i.e., total weight exactly $n$) is
    $$
    \frac{1}{1 - f(x)} = \frac{1-x}{1-2x-x^2}.
    $$
    To get the generating function for the number $m_n$ of sequences $M_1,\dots,M_t \in \mathcal{M}$ with total weight at most $n$, we need to multiply by $\frac{1}{1-x}$. Thus,
    $$
    \sum_{n=0}^\infty m_n x^n = \frac{1}{1-2x-x^2} = 
    \frac{\sqrt{2}+1}{2\sqrt{2}} \cdot \frac{1}{1 - (\sqrt{2}+1)x} + 
    \frac{\sqrt{2}-1}{2\sqrt{2}} \cdot \frac{1}{1 + (\sqrt{2}-1)x}
    .
    $$
    It follows that 
    $m_n = \frac{1}{2\sqrt{2}} \cdot \left( (1+\sqrt{2})^{n+1} - (1-\sqrt{2})^{n+1} \right) \geq 
    \lfloor \frac{1}{2\sqrt{2}} (1+\sqrt{2})^{n+1} \rfloor$, where the inequality holds because $m_n$ is an integer and
    $|(1-\sqrt{2})^{n+1}| < 1$.
    This proves the theorem. 
    \end{proof}

    We note that the construction we used above is different compared to the one used in \cite{CKP} and \Cref{thm:simple}, both of which correspond to taking vectors of the form $A^{a_1} B^2 \dots A^{a_t} B^2 \colvec{2}{1}{0}$. 
    We suspect that most of the vectors of the form 
    $A^{a_1} B^{b_1} \dots A^{a_t} B^{b_t} \colvec{2}{1}{0}$
    are unique, which would lead to a slightly better bound of about $2.61^{n}$. This is essentially the limit of what can be achieved by using matrices generated by $A$ and $B$. The issue here is that $A$ and $B$ do not generate a free semigroup \cite{free-semigroup}. Similarly, as before, \Cref{lem:ramsey} combined with \Cref{thm:stronger} gives \Cref{thm:planar}. 

    As written, our constructions do not produce regular graph witnesses. The following argument shows how we can regularize them to obtain \Cref{thm:regular}.
    \begin{proof}[ of \Cref{thm:regular}]
        We first show that we can attain exponentially many values among planar graphs with maximum degree at most $3$. To see this, suppose that we have a graph $G$ with this property and note that if $e$ is an edge of $G$ with both endpoints having degree at most two, then by adding $i\ge 2$ vertices in such a way that together with $e$ they make a $C_{i+2}$, we preserve the maximum degree property. Note also that since $i \ge 2$, one of the new edges will have both endpoints having degree equal to two, denote it by $f$ and the new graph by $G'$. Then, we have $\colvec{2}{\tau(G'/ f)}{\tau(G'-f)}=A^{i-1}C \colvec{2}{\tau(G/ e)}{\tau(G-e)}$. This can be easily seen directly or by noticing that the construction used in \Cref{lem:triangle-2} places a triangle on top of $e$ and marks one of the new edges by $f$. We can then subdivide $f$ for $i-1$ times using \Cref{lem:subdivide}, giving us the stated product. 
        Starting with an edge and repeatedly applying this operation, this implies that for every $t \ge 1$ and $a_1,\ldots, a_t \ge 1$, the vector $$\vec{w}(a_1,\ldots,a_t):=A^{a_1}CA^{a_2}C\cdots A^{a_t}C \colvec{2}{1}{0}$$
        is $(n+2)$-planar-feasible for $n = a_1+\ldots +a_t+t$ with a graph whose degrees are all equal to $2$ or $3$. As argued in the proof of \Cref{thm:simple}, all these vectors are distinct, giving at least 
        $\sum_{t=1}^{n/2-1} \binom{n-t-1}{t-1} \ge 2^{(2/3-o(1))n}$ different such vectors. 

        By the same argument as in the proof of \Cref{lem:ramsey}, this implies there are at least $2^{(1/3-o(1))n/6}$ planar graphs $G$ with distinct $\tau(G)$, each with at most $n/6$ vertices, and having all degrees equal to $2$ or $3$ (we can always remove vertices of degree one without changing the number of spanning trees). By the pigeonhole principle, there are some $0\le i \leq m \le n/6$ such that there are at least $2^{(1/3-o(1))n/6}/n^2\ge 2^{(1/3-o(1))n/6}$ such graphs with exactly $m$ vertices and exactly $i$ vertices of degree $2$. Fixing any such $G$, we now modify $G$ as follows: 
        Let $H$ denote the graph obtained from $K_4$ by subdividing one edge once, so that $H$ has exactly one vertex of degree 2, and all other vertices are of degree 3. For each of the $i$ vertices $v \in V(G)$ of degree 2, add a new copy of $H$ and connect the unique degree-2 vertex of $H$ to $v$.
%
%
        As $|V(H)| = 5$ and $i \leq m \leq n/6$, the resulting graph has exactly $m+5i \leq n$ vertices. 
        Moreover, this graph is cubic, and its number of spanning trees is equal to $\tau(G) \cdot \tau(H)^i$. Since we fixed $i$, the term $\tau(H)^i$ is independent of $G$ from our family, so we obtain at least $2^{(1/18-o(1))n}$ different values taken by $\tau(G)$ on planar cubic graphs on at most $n$ vertices. 

If $k>3$, we can in our construction first add a triangle on top of our witness edge, while keeping the same witness edge, and then add another one where we move it to a new edge and then subdivide. Our first operation is captured by multiplying by matrix $B$ (by \Cref{lem:triangle}) so each iteration corresponds to multiplication by $A^{i-1}CB$. By the same argument we still have $2^{\Omega(n/k)}$ such graphs on up to $n/(k+3)$ vertices with different values of $\tau(\cdot)$, with the key difference being that the graphs we construct all have degrees $2$ or $4$.
As before, by pigeonholing, we may assume that all these graphs have the same number of vertices $m$, and also the same number of vertices of degree $2$. 
We now proceed similarly as in the $k=3$ case, but with a different choice of ``pendant" graphs $H$. 
Namely, take $H$ to be a graph with exactly $k-2$ vertices of degree $k-1$ and the remaining vertices all of degree $k$.
To see that such an $H$ exists we may take $K_{k+1}$, remove a path of length $k-1$ and add a new vertex joined to all vertices on the removed path. Let also $H'$ be a graph obtained from $H$ by adding an edge between two vertices of degree $k-1$, so that $H'$ has $k-4$ vertices of degree $k-1$ and all others of degree $k$.
Now append copies of $H,H'$ to the vertices of $G$ as follows:
For each vertex $v$ of $G$ of degree $2$, add a copy of $H$ and join $v$ to the $k-2$ vertices of $H$ of degree $k-1$; for each vertex $v$ of $G$ of degree $4$, add a copy of $H'$ and join $v$ to the $k-4$ vertices of $H'$ of degree $k-1$. The resulting graph is $k$-regular and its number of spanning trees is $\tau(G) \cdot \tau(H)^i \cdot \tau(H')^j$, where $i$ (resp. $j$) is the number of degree-2 (resp. degree-4) vertices in $G$. As $i,j$ are fixed, all of these graphs have different $\tau(\cdot)$-values.
Also, the number of vertices of this graph is $m + m \cdot V(H) = m(k+3) \leq n$. 

The above shows that for every $k\ge 3$, we can find $2^{\Omega(n/k)}$ $k$-regular graphs on the same number of vertices $n' \leq n$ with different $\tau(\cdot)$-values. In order to get graphs on exactly $n$ vertices, we 
slightly modify our construction as follows:
Take a graph $H''$ on $k+2+n-n'$ vertices which has $k-2$ vertices of degree $k-1$ and all other vertices of degree $k$; such an $H''$ can be obtained by starting with an arbitrary $k$-regular graph on $k+1+n-n'$ vertices, removing a path of length $k-1$, and adding a vertex joined to all vertices of the path. Now, in the construction, replace one of the copies of $H$ that we add with a copy of $H''$. This brings the number of vertices to $n$, and only multiplies the tree counts of every graph in our family by the same amount, leaving them all distinct. 
%
\end{proof}

\vspace{-0.4cm}

\section{Hitting any value modulo $N$}
\vspace{-0.1cm}
   In this section we prove our modular version of the dual question asking how large a (planar) graph $G$ do we need to take in order to attain 
a given value of $\tau(G)$. The following is our stronger variant of \Cref{thm:modular-intro}.

    \begin{thm}\label{thm:modular}
        Let $N$ be sufficiently large. For any coprime $a,b$, there exists $t \le O(\log N)$ and $i_1,\ldots,i_t \in \{1,2\}$ such that $$\colvec{2}{a}{b}\equiv A^{i_1}DA^{i_2}D\ldots A^{i_t}D \colvec{2}{1}{0} \bmod N.$$ In particular, there exists a $O(\log N)$-planar-feasible vector $\colvec{2}{x}{y}$ such that $x\equiv a \bmod N$ and $y \equiv b \bmod N$. 
    \end{thm}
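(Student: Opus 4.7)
The plan is to recognize this as (a strong form of) modular Zaremba with partial quotients in $\{1,2\}$, and to prove it by invoking expansion/spectral-gap properties of the image in $\SL_2(\mathbb{Z}/N)$ of the semigroup $\Gamma^+ := \langle AD,\, A^2 D\rangle \subseteq \SL_2(\mathbb{Z})$.

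First, I would record the continued-fraction interpretation of the right-hand side. A direct computation shows
$$A^{i}D \;=\; \begin{pmatrix} i+1 & i \\ 1 & 1 \end{pmatrix},$$
so $A^{i_1}D \cdots A^{i_t}D \colvec{2}{1}{0}$ is exactly the column vector of numerator and denominator (in lowest terms) of the continued fraction $[i_1,1,i_2,1,\ldots,i_t,1]$. Thus the theorem asks, given coprime $(a,b) \bmod N$, for a continued-fraction representation of $a/b$ modulo $N$ with partial quotients in $\{1,2\}$ and length $O(\log N)$. Equivalently, it suffices to show that $\Gamma^+$, reduced mod $N$ and applied to $\colvec{2}{1}{0}$, reaches every primitive (coprime) vector in $(\mathbb{Z}/N)^2$ within $O(\log N)$ steps.

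Next, I would exploit the algebraic richness of $\Gamma^+$. Since $A = (A^2D)(AD)^{-1}$ and $D = A^{-1}(AD)$ both lie in $\langle AD, A^2D\rangle$, the group closure of $\Gamma^+$ is all of $\SL_2(\mathbb{Z})$, so the mod-$N$ reduction of $\{AD,A^2D\}$ is a Zariski-dense generating set for $\SL_2(\mathbb{Z}/N)$. I would then invoke the Bourgain--Gamburd theorem (for prime moduli) together with its extension to composite moduli (Bourgain--Varj\'u), to conclude that the Cayley graph of $\SL_2(\mathbb{Z}/N)$ with generators $\{AD, A^2D, (AD)^{-1}, (A^2D)^{-1}\}$ is an expander, with spectral gap bounded below by an absolute constant uniformly in $N$.

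The spectral gap can then be converted into a positive-word mixing bound: a random length-$t$ positive word in $\{AD,A^2D\}$ lies within $L^2$-distance $e^{-ct}$ of the uniform distribution on $\SL_2(\mathbb{Z}/N)$, so that for $t = C\log N$ with $C$ sufficiently large, every element of $\SL_2(\mathbb{Z}/N)$ is achieved with positive probability. Since the $\SL_2(\mathbb{Z}/N)$-orbit of $\colvec{2}{1}{0}$ is precisely the set of coprime pairs modulo $N$, this produces the required sequence $(i_1,\ldots,i_t)$, and in particular the planar-feasible vector realizing $(a,b) \bmod N$.

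The main obstacle is twofold. First, one must pass from the symmetric Cayley-graph setup, to which Bourgain--Gamburd-type machinery applies directly, to the one-sided semigroup statement in the theorem; this is handled by analyzing the non-symmetric transition operator and controlling its singular values in terms of the (symmetric) spectral gap. Second, and more seriously, one needs a spectral gap that is uniform in composite $N$, resolved via Bourgain--Varj\'u, or, alternatively, via a prime-power-by-prime-power analysis combined with CRT-type bookkeeping to produce a single sequence $(i_1,\ldots,i_t)$ that works modulo every prime power divisor of $N$ simultaneously.
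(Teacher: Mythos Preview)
Your outline is correct and follows the same high-level strategy as the paper: establish expansion of $\SL_2(\mathbb{Z}_N)$ with respect to $S=\{AD,A^2D\}$, pass from the symmetric set $S\cup S^{-1}$ to positive $S$-words, and hit a matrix whose first column is $(a,b)$. Two points of comparison are worth noting, both places where the paper takes a lighter route.

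First, your invocation of Bourgain--Gamburd and Bourgain--Varj\'u is substantial overkill, and your ``more serious'' obstacle for composite $N$ is illusory. Since $A=(A^2D)(AD)^{-1}$ and $D=(AD)(A^2D)^{-1}(AD)$, the set $S\cup S^{-1}$ generates all of $\SL_2(\mathbb{Z})$; Selberg's theorem then gives a uniform spectral gap for $\Cay(\SL_2(\mathbb{Z}_N),S\cup S^{-1})$ for \emph{every} $N$, prime or not. The Bourgain--Gamburd/Varj\'u machinery is designed for thin (infinite-index) subgroups and is simply not needed here.

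Second, for the symmetric-to-directed passage, the paper avoids any singular-value analysis. It observes that a Cayley digraph is Eulerian, so for every $U$ the number of $S$-edges leaving $U$ equals the number entering; hence $c$-expansion of $\Cay(G,S\cup S^{-1})$ forces $\frac{c}{2|S|}$-out-expansion of $\Cay(G,S)$ (and of $\Cay(G,S^{-1})$). One then grows forward from the identity in $\Cay(G,S)$ and forward from the target in $\Cay(G,S^{-1})$; each reaches more than half of $\SL_2(\mathbb{Z}_N)$ in $O(\log N)$ steps, and the intersection yields a positive $S$-word of length $O(\log N)$ from the identity to the target. This replaces your proposed $L^2$-mixing argument by a two-line combinatorial one.
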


    Before turning to the proof, we will need a few definitions and a well known fact about the expansion in Cayley graphs of $\SL_2(\mathbb{Z}_N).$ We say that an $n$-vertex graph is a {\em $c$-expander} if any subset $U$ on up to $n/2$ vertices has its external neighborhood $N(U)$ of size at least $c|U|$. Given a group $G$ and a subset of its elements $S$, the Cayley digraph of $G$ generated by $S$, denoted $Cay(G,S)$, is the graph whose vertex-set is $G$ and where an edge from $a$ to $b$ exists if and only if $a^{-1}b \in S$. In case $S$ is symmetric (i.e., $a \in S \implies a^{-1} \in S$), we have an edge from $a$ to $b$ if and only if we have an edge from $b$ to $a$, so it is customary to drop the directions and refer to this as the Cayley graph of $G$ generated by $S$.

    Selberg's Theorem \cite{selberg1965estimation} implies that $\Cay(\SL_2(\mathbb{Z}_N),S \cup S^{-1})$ is an expander provided $S \cup S^{-1}$ generates $\SL_2(\mathbb{Z})$ (in fact whenever it generates a subgroup of finite index). It is well known that for $S=\{A,D\}$ we have that $S \cup S^{-1}$ generates $\SL_2(\mathbb{Z})$. If we set $X=AD, Y=A^2D$ we have $A=YX^{-1}$ and $D=XY^{-1}X$ so $X,Y,X^{-1},Y^{-1}$ generate $\SL_2(\mathbb{Z})$. This implies the following proposition, see also \cite{Lubotzky-survey,bourgain-gamburd} for more details on the expansion of Cayley graphs in $\SL_2(\mathbb{Z}_N)$.
    
    \begin{prop}\label{prop:sl2-expansion}
        There exists $c>0$ so that for any large enough $N$, the 
	    Cayley graph $\Cay(\SL_2(\mathbb{Z}_N),S \cup S^{-1})$, with $S=\{AD,A^{2}D\},$ is a $c$-expander.
    \end{prop}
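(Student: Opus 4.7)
The plan is to reduce the statement to an essentially black-box application of Selberg's theorem on the expansion of Cayley graphs of $\SL_2(\mathbb{Z}_N)$. The remaining work consists of a short group-theoretic check that the given symmetric generating set, viewed in $\SL_2(\mathbb{Z})$, in fact generates the whole group (or at least a finite-index subgroup), which is the hypothesis required by the expansion theorem.

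First I would verify the two matrix identities already flagged in the text: a short direct computation shows that $YX^{-1}=A$ and $XY^{-1}X=D$, where $X=AD$ and $Y=A^2D$. Combined with the classical fact that the two elementary unipotent matrices $A$ and $D$ generate $\SL_2(\mathbb{Z})$ (provable, for instance, by running the Euclidean algorithm on the first column of an arbitrary element of $\SL_2(\mathbb{Z})$), this immediately gives that $S\cup S^{-1}=\{X,X^{-1},Y,Y^{-1}\}$ generates $\SL_2(\mathbb{Z})$ as well.

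With a fixed symmetric generating set of $\SL_2(\mathbb{Z})$ in hand, I would then invoke Selberg's theorem — or, for composite moduli, its refinements and extensions due to Lubotzky--Phillips--Sarnak and Bourgain--Gamburd — to conclude a uniform lower bound $\lambda>0$ on the normalized spectral gap of $\Cay(\SL_2(\mathbb{Z}_N),S\cup S^{-1})$, independent of $N$. A standard application of Cheeger's inequality for regular graphs then converts this spectral expansion into vertex expansion with a constant $c=c(\lambda)>0$, giving the desired conclusion that every vertex subset of size at most half of $|\SL_2(\mathbb{Z}_N)|$ has external neighborhood of size at least $c$ times its own.

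The main (and really only) obstacle is the passage from the spectral gap at prime moduli, where Selberg's $3/16$ bound and the Bourgain--Gamburd machinery apply most directly, to arbitrary $N$. For the proposition, this is entirely handled by the references cited right after its statement; if I had to unpack it I would decompose $\mathbb{Z}_N$ via the Chinese Remainder Theorem and combine the known uniform spectral gaps for prime power moduli, but for the purposes of the present paper I would simply quote the cited results as a black box.
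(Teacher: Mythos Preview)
Your proposal is correct and follows essentially the same route as the paper: verify the identities $A=YX^{-1}$ and $D=XY^{-1}X$ to deduce that $S\cup S^{-1}$ generates $\SL_2(\mathbb{Z})$, then invoke Selberg's theorem (with the cited references for general $N$) as a black box. The only difference is that you spell out a few details (Euclidean algorithm for $\langle A,D\rangle=\SL_2(\mathbb{Z})$, Cheeger's inequality, the composite-modulus caveat) that the paper leaves implicit or absorbs into its citations.
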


    Another ingredient is an observation that having expansion in the Cayley graph $\Cay(\SL_2(\mathbb{Z}_N),S \cup S^{-1})$ implies expansion in the Cayley digraph $\Cay(\SL_2(\mathbb{Z}_N),S)$. Here, we say that an $n$-vertex digraph is a {\em $c$-out-expander} if any subset $U$ on up to $n/2$ vertices has its external out-neighborhood $N^+(U)$ of size at least $c|U|$. 

    \begin{prop}\label{prop:usual-to-directed-expansion}
        Let $G$ be a finite group and $S \subset G \setminus \{e\}$ (where $e$ is the identity element). If $\Cay(G,S \cup S^{-1})$ is a $c$-expander for some $c>0,$ then $\Cay(G,S)$ is a $\frac{c}{2|S|}$-out-expander.
    \end{prop}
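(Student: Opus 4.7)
The plan is to leverage the simple decomposition $N(U) = N^+(U) \cup N^-(U)$ in the undirected Cayley graph $\Cay(G, S \cup S^{-1})$, where $N^+(U) = US \setminus U$ and $N^-(U) = US^{-1} \setminus U$ denote the external out- and in-neighborhoods in the directed graph $\Cay(G,S)$. Given the $c$-expansion hypothesis $|N(U)| \ge c|U|$ for all $|U| \le |G|/2$, the out-expansion of $\Cay(G,S)$ will follow immediately once I bound $|N^-(U)|$ by a constant multiple of $|N^+(U)|$.

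The key step is to show $|N^-(U)| \le |S|\cdot |N^+(U)|$. I would start from the union bound $|N^-(U)| \le \sum_{s \in S}|Us^{-1} \setminus U|$ and prove that each term satisfies $|Us^{-1} \setminus U| \le |N^+(U)|$. In fact, I claim the set-size identity $|Us^{-1} \setminus U| = |Us \setminus U|$, and since $Us \setminus U \subseteq US \setminus U = N^+(U)$, this will give what we want term by term. The identity itself is a one-line bijection: right-multiplication by $s^{-1}$ is a bijection of $G$ sending $Us$ to $U$ and $U$ to $Us^{-1}$, hence mapping $Us \setminus U$ bijectively to $U \setminus Us^{-1}$, and then $|U \setminus Us^{-1}| = |Us^{-1} \setminus U|$ because $|U| = |Us^{-1}|$.

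Putting the pieces together, for any $U$ with $|U| \le |G|/2$ one obtains
\begin{equation*}
c|U| \;\le\; |N(U)| \;\le\; |N^+(U)| + |N^-(U)| \;\le\; (1+|S|)\,|N^+(U)| \;\le\; 2|S|\cdot|N^+(U)|,
\end{equation*}
where the last step uses $|S| \ge 1$, which we may safely assume (the case $S = \emptyset$ is vacuous). Rearranging yields $|N^+(U)| \ge \frac{c}{2|S|}|U|$, exactly the required out-expansion. I do not anticipate any real obstacle; the only slightly non-obvious ingredient is the symmetric identity $|Us\setminus U| = |Us^{-1}\setminus U|$, and everything else is routine bookkeeping.
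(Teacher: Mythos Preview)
Your proof is correct, and it takes a route that is close in spirit to the paper's argument but packaged differently. The paper argues via edge counts: since the Cayley digraph $D=\Cay(G,S)$ is Eulerian, the number of directed edges leaving $U$ equals the number entering $U$, and each is half the number of undirected edges of $H=\Cay(G,S\cup S^{-1})$ crossing the cut; this is at least $\tfrac{c}{2}|U|$ by the expansion of $H$, and dividing by the maximum in-degree $|S|$ yields $|N^+(U)|\ge \tfrac{c}{2|S|}|U|$. You instead work entirely at the level of vertex sets, using the per-generator identity $|Us\setminus U|=|Us^{-1}\setminus U|$ to bound $|N^-(U)|\le |S|\cdot|N^+(U)|$ and then split $|N(U)|\le |N^+(U)|+|N^-(U)|$. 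Both proofs ultimately rest on the same symmetry (for each $s$, the $s$-edges leaving $U$ match the $s$-edges entering $U$), but your version avoids the detour through edge counts and the Eulerian bookkeeping, at the cost of a union bound that introduces the factor $|S|$ in a slightly different place. Either way the constant $\tfrac{c}{2|S|}$ comes out the same.
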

    \begin{proof}
        Let $n=|G|$ and let $U \subseteq G$ have size $u=|U| \le n/2$. By the $c$-expansion of $H=\Cay(G,S \cup S^{-1})$, $H$ has at least $c|U|$ edges between $U$ and $G \setminus U$. Since $D=\Cay(G,S)$ is Eulerian, we have 
        \begin{align*}0=\sum_{v\in U} (d_D^+(v)-d_D^{-}(v))=&\sum_{v\in U} (d_{D}^+(v,G \setminus U)-d_D^{-}(v,G \setminus U))+\cancelto{0}{\sum_{v\in U} (d_{D[U]}^+(v)-d_{D[U]}^{-}(v))} \implies\\[2.5ex]
        &\sum_{v\in U} d_{D}^+(v,G \setminus U)=\sum_{v \in U} d_D^{-}(v,G \setminus U)=\frac12 \sum_{v \in U} d_H(v,G \setminus U) \ge \frac{c}{2} \cdot |U|.
        \end{align*}
        Since the maximum indegree in $D$ is $|S|$, we obtain that there are at least $\frac{c}{2|S|} \cdot |U|$ vertices in $N^{+}(U)$ in $D$, as desired. 
    \end{proof}

    We are now ready to prove \Cref{thm:modular}.
    
    \begin{proof}[ of \Cref{thm:modular}]
     Let $S=\{AD,A^2D\}$. By \Cref{prop:sl2-expansion} $\Cay(\SL_2(\mathbb{Z}_N),S \cup S^{-1})$ is a $c$-expander for some $c>0$. By \Cref{prop:usual-to-directed-expansion}, both $\Cay(\SL_2(\mathbb{Z}_N),S)$ and $\Cay(\SL_2(\mathbb{Z}_N),S^{-1})$ are $\frac{c}{4}$-out-expanders. Starting from an arbitrary vertex of $\Cay(\SL_2(\mathbb{Z}_N),S)$, by repeatedly expanding we reach at least $(1+c/4)^i$ vertices after $i$ steps and in particular, we reach more than half of the vertex set in at most $\log_{1+c/4}(N^3)$ steps. Similarly, starting from any vertex, by repeatedly expanding $\Cay(\SL_2(\mathbb{Z}_N),S^{-1})$ we reach more than half the vertices in at most $\log_{1+c/4}(N^3)$ steps. This shows that starting from, say, the identity matrix, we can reach any other matrix in $\SL_2(\mathbb{Z}_N)$ in at most  $2\log_{1+c/4}(N^3) \le O(\log N)$ multiplications by elements of $S$. 
    
    Now, given coprime $a,b$, we choose $e,f \in \mathbb{Z}$ such that $ae-bf=1$, so that 
    $M := \begin{pmatrix}
        a & e\\
        b & f\\
    \end{pmatrix} \in \SL_2(\mathbb{Z}_N)$. By the above, we can write $M$ as the product of $O(\log N)$ elements of $S$. The elements of $S$, i.e. $AD = C$ and $A^2D = AC$, both correspond to graph operations (via \Cref{lem:subdivide,lem:triangle-2}). Hence, $M \colvec{2}{1}{0}$ is a $O(\log N)$-feasible vector. 
    Also, $M \colvec{2}{1}{0}=\colvec{2}{a}{b}$ by the choice of $M$, as required. 
    \end{proof}

    We note that \Cref{thm:modular} gives another proof of the exponential growth of the tree spectrum function. 

\vspace{-0.3cm}    
\section{An application to continued
fractions}
\vspace{-0.1cm}
The connection to the theory of continued fractions goes via the following, well-known association between continued fractions and matrix products. We note that the second equivalence was observed in  \cite{CKP} and involves precisely the matrices $A$ and $D$ we encounterd in the previous sections. Given $0\le t<u$ such that $\gcd(t,u)=1$ we get: 
\begin{align}
     \frac{t}{u}=[a_1,b_1,\ldots, a_{\ell}, b_\ell] \quad &\Leftrightarrow \quad 
\colvec{2}{u}{t}
=\begin{pmatrix}
  {a_1} & 1 \\
  1 & 0
\end{pmatrix}
\begin{pmatrix}
  {b_1} & 1 \\
  1 & 0
\end{pmatrix}
 \cdots
\begin{pmatrix}
  {a_\ell} & 1 \\
  1 & 0
\end{pmatrix}
\begin{pmatrix}
  {b_\ell} & 1 \\
  1 & 0
\end{pmatrix}\colvec{2}{1}{0}\nonumber \\
\quad &\Leftrightarrow \quad 
\colvec{2}{u}{t}
=\begin{pmatrix}
  1 & {a_1} \\
  0 & 1
\end{pmatrix}
\begin{pmatrix}
  1 & 0 \\
  {b_1} & 1
\end{pmatrix}
 \cdots
\begin{pmatrix}
  1 & {a_\ell} \\
  0 & 1
\end{pmatrix}
\begin{pmatrix}
  1 & 0 \\
  {b_\ell} & 1
\end{pmatrix}\colvec{2}{1}{0}\nonumber \\
\quad &\Leftrightarrow \quad 
\colvec{2}{u}{t}
=A^{a_1}D^{b_1}\cdots A^{a_\ell}D^{b_\ell}\colvec{2}{1}{0}. \label{eq:continued-fractions}
\end{align}



We are now ready to prove \Cref{thm:zaremba-weak}.

\begin{proof}[ of \Cref{thm:zaremba-weak}]
    Let $n= \floor{\frac{\log N}{2}}$. 
    As argued in the proof of \Cref{thm:simple}, if we pick $\ell \le n$ and $1\le a_2,\ldots a_\ell \le 2$ the vectors $$\vec{v}(a_2,\ldots,a_\ell):=ADA^{a_2}D\cdots A^{a_\ell}D \colvec{2}{1}{0}$$
    are all distinct and there are at least $1+2+\ldots + 2^{n-1}=2^{n}-1$ of them. Note that if $\vec{v}(a_2,\ldots,a_\ell)= \colvec{2}{u}{t},$ then $t<u \le 4^{n} \le N,$ since the maximum entry of $(A^2D)^{n}$ is at most $4^n$. Note further that the ratio $\frac{t}{u}$ has the required continued fraction expansion by \eqref{eq:continued-fractions}. In addition, we know that $\gcd(t,u)=1$ since multiplying a vector with coprime coordinates by either $A$ or $D$ preserves this property (since $\gcd(x+y,y)=\gcd(x,y)=\gcd(x,x+y)$).

    Now, by the same argument behind \Cref{lem:ramsey}, we conclude that either at least $2^{n/2}-1$ of the vectors have distinct first coordinates or at least $2^{n/2}-1$ of them have distinct sums. In the former case we are done immediately, and in the latter we may multiply our vectors by $A$ to obtain a vector with first coordinate being the sum, which are in particular then all distinct and still smaller than $N$. In either case we get at least $2^{n/2}-1\ge N^{1/4}/2-1$ distinct $u<N$ for which there is a coprime $t$ with 
the desired properties.
\end{proof}

We note that one can easily improve the exponent in \Cref{thm:zaremba-weak}, for example by restricting attention
in the above proof to only vectors which have precisely $n/2$ exponents equal to $1/2$. It can also be improved a bit further by relaxing either the restriction $a_i \in \{1,2\}$ to instead allow the $a_i$'s to be smaller than a fixed constant $C$, or by removing the requirement that the second entry is one. However, the limit of our approach here is always $N^{1/2}$ due to the use of the argument behind \Cref{lem:ramsey}. 





\textbf{Acknowledgements.}
We thank Swee Hong Chan, Alex Kontorovich and Igor Pak for helpful 
comments on an early version of the paper, and 
thank Boris Bukh, Stefan Glock and Leo Versteegen for useful discussions. 
We are also indebted to an anonymous referee for suggesting an improved proof for Theorem \ref{thm:stronger}, yielding a better bound.



\providecommand{\MR}[1]{}
\providecommand{\MRhref}[2]{%
  \href{http://www.ams.org/mathscinet-getitem?mr=#1}{#2}
}

   \bibliographystyle{amsplain_initials_nobysame}
   \bibliography{ref}

\providecommand{\bysame}{\leavevmode\hbox to3em{\hrulefill}\thinspace}
\providecommand{\MR}{\relax\ifhmode\unskip\space\fi MR }
\providecommand{\MRhref}[2]{%
  \href{http://www.ams.org/mathscinet-getitem?mr=#1}{#2}
}
\providecommand{\href}[2]{#2}
\begin{thebibliography}{10}

\bibitem{alon1990number}
N.~Alon, \emph{The number of spanning trees in regular graphs}, Random
  Structures \& Algorithms \textbf{1} (1990), no.~2, 175--181.

\bibitem{aza14}
J.~Azarija, \emph{Counting graphs with different numbers of spanning trees
  through the counting of prime partitions}, Czechoslovak Math. J.
  \textbf{64(139)} (2014), no.~1, 31--35. \MR{3247440}

\bibitem{AS13}
J.~Azarija and R.~{\v{S}}krekovski, \emph{Euler's idoneal numbers and an
  inequality concerning minimal graphs with a prescribed number of spanning
  trees}, Math. Bohem. \textbf{138} (2013), no.~2, 121--131. \MR{3099303}

\bibitem{cayley}
C.~W. Borchardt, \emph{{\"U}ber eine {I}nterpolationsformel f{\"u}r eine {A}rt
  symmetrischer {F}unctionen und {\"u}ber deren {A}nwendung}, 1861.

\bibitem{bourgain-gamburd}
J.~Bourgain and A.~Gamburd, \emph{Uniform expansion bounds for {C}ayley graphs
  of {${\rm SL}_2(\mathbb{F}_p)$}}, Ann. of Math. (2) \textbf{167} (2008),
  no.~2, 625--642. \MR{2415383}

\bibitem{BK}
J.~Bourgain and A.~Kontorovich, \emph{On {Z}aremba's conjecture}, Ann. of Math.
  (2) \textbf{180} (2014), no.~1, 137--196. \MR{3194813}

\bibitem{free-semigroup}
J.~L. Brenner and A.~Charnow, \emph{Free semigroups of {$2\times 2$} matrices},
  Pacific J. Math. \textbf{77} (1978), no.~1, 57--69. \MR{507620}

\bibitem{BS10}
K.~Buchin and A.~Schulz, \emph{On the number of spanning trees a planar graph
  can have}, Algorithms--ESA 2010: 18th Annual European Symposium, Liverpool,
  UK, September 6-8, 2010. Proceedings, Part I 18, Springer, 2010,
  pp.~110--121.

\bibitem{CKP}
S.~H. Chan, A.~Kontorovich, and I.~Pak, \emph{Spanning trees and continued
  fractions}, preprint arXiv:2411.18782 (2024).

\bibitem{chan2024equality}
S.~H. Chan and I.~Pak, \emph{Equality cases of the {S}tanley--{Y}an log-concave
  matroid inequality}, preprint arXiv:2407.19608 (2024).

\bibitem{CP24a}
S.~H. Chan and I.~Pak, \emph{Linear extensions and continued fractions},
  European J. Combin. \textbf{122} (2024), Paper No. 104018, 15. \MR{4768235}

\bibitem{hen}
D.~Hensley, \emph{A polynomial time algorithm for the {H}ausdorff dimension of
  continued fraction {C}antor sets}, J. Number Theory \textbf{58} (1996),
  no.~1, 9--45. \MR{1387719}

\bibitem{kirchhoff}
G.~Kirchhoff, \emph{Ueber die aufl{\"o}sung der gleichungen, auf welche man bei
  der untersuchung der linearen vertheilung galvanischer str{\"o}me gef{\"u}hrt
  wird}, Annalen der Physik \textbf{148} (1847), no.~12, 497--508.

\bibitem{zaremba-survey}
A.~Kontorovich, \emph{From {A}pollonius to {Z}aremba: local-global phenomena in
  thin orbits}, Bull. Amer. Math. Soc. (N.S.) \textbf{50} (2013), no.~2,
  187--228. \MR{3020826}

\bibitem{Lubotzky-survey}
A.~Lubotzky, \emph{Cayley graphs: eigenvalues, expanders and random walks},
  Surveys in combinatorics, 1995 ({S}tirling), London Math. Soc. Lecture Note
  Ser., vol. 218, Cambridge Univ. Press, Cambridge, 1995, pp.~155--189.
  \MR{1358635}

\bibitem{mckay1983spanning}
B.~D. McKay, \emph{Spanning trees in regular graphs}, European Journal of
  Combinatorics \textbf{4} (1983), no.~2, 149--160.

\bibitem{moon}
J.~W. Moon, \emph{Counting labelled trees}, Canadian Mathematical Monographs,
  vol. No. 1, Canadian Mathematical Congress, Montreal, QC, 1970, From lectures
  delivered to the Twelfth Biennial Seminar of the Canadian Mathematical
  Congress (Vancouver, 1969). \MR{274333}

\bibitem{neb}
L.~Nebesk{\'y}, \emph{On the minimum number of vertices and edges in a graph
  with a given number of spanning trees}, {\v{C}}asopis P{\v{e}}st. Mat.
  \textbf{98} (1973), 95--97, 100. \MR{317998}

\bibitem{sed70}
J.~Sedl{\'a}{\v{c}}ek, \emph{On the minimal graph with a given number of
  spanning trees}, Canad. Math. Bull. \textbf{13} (1970), 515--517. \MR{272672}

\bibitem{sed70b}
J.~Sedl{\'a}{\v{c}}ek, \emph{Regular graphs and their spanning trees},
  {\v{C}}asopis P{\v{e}}st. Mat. \textbf{95} (1970), 420--426. \MR{282876}

\bibitem{sed77}
J.~Sedl{\'a}{\v{c}}ek, \emph{More on spanning trees of connected graphs},
  {\v{C}}asopis P{\v{e}}st. Mat. \textbf{104} (1979), no.~1, 75--85.
  \MR{523574}

\bibitem{sed69}
J.~Sedl{\'a}{\v{c}}ek, \emph{On the number of spanning trees of finite graphs},
  \v Casopis P\v est. Mat. \textbf{94} (1969), 217--221, 222. \MR{250931}

\bibitem{sed66}
J.~Sedl{\'a}\v{c}ek, \emph{On the spanning trees of finite graphs},
  {\v{C}}asopis P{\v{e}}st. Mat. \textbf{91} (1966), 221--227. \MR{195763}

\bibitem{selberg1965estimation}
A.~Selberg, \emph{On the estimation of {F}ourier coefficients of modular
  forms}, Proceedings of Symposia in Pure Mathematics, American Mathematical
  Society, 1965, pp.~1--15.

\bibitem{stong}
R.~Stong, \emph{Minimal graphs with a prescribed number of spanning trees},
  Australas. J. Combin. \textbf{82} (2022), 182--196. \MR{4387583}

\bibitem{zaremba}
S.~K. Zaremba, \emph{La m\'ethode des ``bons treillis'' pour le calcul des
  int\'egrales multiples}, Applications of number theory to numerical analysis
  ({P}roc. {S}ympos., {U}niv. {M}ontr\'eal, {M}ontreal, {Q}ue., 1971), Academic
  Press, New York-London, 1972, pp.~39--119. \MR{343530}

\end{thebibliography}

\end{document}